\DeclareMathOperator{\Aut}{Aut}
\DeclareMathOperator{\Hom}{Hom}
\DeclareMathOperator{\id}{id}
\newtheorem{thm}{Theorem}[section]
\newtheorem{cor}[thm]{Corollary}
\newtheorem{lm}[thm]{Lemma}
\newtheorem{prop}[thm]{Proposition}
\newtheorem{defn}[thm]{Definition}
\newtheorem{rem}[thm]{Remark}
\newtheorem{exam}[thm]{Example}
\numberwithin{equation}{section}
\begin{document}


\author[Kudaybergenov]{Karimbergen Kudaybergenov}
\email{karim2006@mail.ru}
\address{[K.\ Kudaybergenov] Department of Mathematics, Karakalpak State University, Ch. Abdirov 1, Nukus 230113, Uzbekistan}

\author[Ladra]{Manuel Ladra}
\email{manuel.ladra@usc.es}
\address{[M.\ Ladra] Department of Mathematics, Institute of Mathematics, University of Santiago de Compostela, 15782, Spain}

\author[Omirov]{Bakhrom Omirov}
\email{omirovb@mail.ru}
\address{[B.\ Omirov] National University of Uzbekistan,  4, University street, 100174, Tashkent,   Uzbekistan}

\title[On Levi--Malcev theorem for Leibniz algebras]{On Levi--Malcev theorem for Leibniz algebras}

\begin{abstract}
The present paper is devoted to provide conditions for the Levi--Malcev theorem to hold or not to hold
(i.e. for two Levi subalgebras to be or not conjugate by an inner automorphism) in the context of finite-dimensional Leibniz algebras over a field of characteristic zero.
 Particularly, in the case of the field $\mathbb{C}$ of complex numbers, we consider all possible cases in which Levi subalgebras are conjugate and not conjugate.
\end{abstract}

\subjclass[2010]{17A32, 17A60, 17B05, 17B40}
\keywords{Lie algebra, Leibniz algebra, semisimple algebra, solvable radical,  Levi--Malcev decomposition, inner automorphism}

\maketitle

\section{Introduction}
 The Levi decomposition, proved by E. E. Levi \cite{Levi}, states that any finite-dimensional real Lie algebra $\mathcal{G}$
 is the semidirect sum of the solvable radical and a semisimple subalgebra (see also \cite{Jac} for the case of characteristic zero),
  and it plays an important role in  Lie theory and representation theory. The semisimple subalgebra of the decomposition is called a Levi subalgebra of  $\mathcal{G}$.
   Moreover, Malcev showed that any two Levi subalgebras are conjugate by an inner automorphism (Levi--Malcev theorem) \cite{Malcev}.

There exists an analogue of Levi--Malcev decomposition for simply connected Lie groups and simply connected algebraic groups over a field of characteristic zero \cite{Onishchik}.
 There again, there is no analogue of the Levi decomposition for most infinite-dimensional Lie algebras; for example affine Lie algebras have a radical consisting of their center,
  but they cannot be written as a semidirect sum of the center and another Lie algebra. There is not a Levi decomposition for finite-dimensional algebras over fields of positive characteristic either.

A comprehensive study of the Lie algebra theory resulted in a number of beautiful results and generalizations. In particular, Loday introduced in~\cite{Lod} a non skew-symmetric analogue of  a Lie algebra, called Leibniz algebra.

A simple, but yet productive property from Lie theory, namely the fact that the right multiplication operator  on an element of the algebra is a derivation,
 can also be taken as a defining property for a Leibniz algebra. In the last years, Leibniz algebras have been under active research; among the numerous papers devoted to this subect, we can find
  some (co)homology and deformations properties, results on various types of decompositions, structure of solvable and nilpotent Leibniz algebras and
   classifications of some classes of graded nilpotent Leibniz algebras.
    Also, many results of theory of Lie algebras have been extended to the Leibniz algebras case. For instance, the classical results on Cartan subalgebras,
    Levi decomposition, Killing form, Engel's theorem, properties of solvable algebras with a given nilradical and others from the theory of Lie algebras are also true for Leibniz algebras.

Recently, D. Barnes proved an analogue of Levi's theorem for the case of Leibniz algebras \cite{Barnes}; namely, a Leibniz algebra $\mathcal{L}$ is decomposed into a semidirect sum of its solvable radical and a semisimple Lie subalgebra, $\mathcal{L}=\mathcal{S}\dot{+} \mathcal{R}$.
  He also presents an example in which two semisimple Lie subalgebras corresponding to different decompositions are not conjugate by an inner automorphism.
It is our objective in this paper to investigate when two semisimple Lie subalgebras in this context are conjugate via (inner) automorphisms.

The organization of this paper is as follows. In Section~\ref{S:prel}  we give some preliminary well-known results and
some technical lemmas about Lie and Leibniz algebras. In Section~\ref{S:conjugacy} we give,  in characteristic zero,  some sufficient conditions  for two Levi subalgebras to be conjugate or not.
Namely, if $\mathcal{L}=\mathcal{S}\dot{+} \mathcal{R}$ is a Levi decomposition of a  Leibniz algebra $\mathcal{L}$,  $\mathcal{I}$ is the ideal generated by its squares and
$\mathcal{J}$ is the maximal submodule of
$\mathcal{I}$ such that $\Hom_\mathcal{S}(\mathcal{S},
\mathcal{I})\equiv \Hom_\mathcal{S}(\mathcal{S},
\mathcal{J})$, then we provide, depending on the value of $\mathcal{J}$, sufficient conditions  for two Levi subalgebras to be or not to be conjugate. In particular, if $\mathcal{J}=0$ then two Levi subalgebras are conjugate by an inner automorphism (Proposition~\ref{homzero}). If  $\mathcal{J} \neq 0$  and $[\mathcal{J}, \mathcal{R}] =0 $, then two Levi subalgebras are conjugate by non-inner automorphisms (Proposition~\ref{conauto}). If  $\mathcal{J} \neq 0$  and depending on the values $[\mathcal{S}, \mathcal{R}]$, $[\mathcal{S}, \mathcal{R}]$, equal to zero or not, with some additional assumptions, we obtain that they are not conjugate (Propositions~\ref{notconj} and \ref{exam02}). Finally, in  Section~\ref{S:complex}, in the case of the field $\mathbb{C}$ of complex numbers, we provide a  necessary and sufficient condition for  two Levi subalgebras to be conjugate by inner automorphisms (Theorem~\ref{T:equiv}).

Throughout the paper, linear spaces and algebras are finite-dimensional over a field of characteristic zero if it is not specified.

\section{Preliminaries}\label{S:prel}

In this section we give necessary definitions and results.

\begin{defn} An algebra $(\mathcal{L},[\cdot,\cdot])$ over a field $\mathbb{F}$ is called a
Leibniz algebra if it satisfies the property
\[ [x,[y,z]]=[[x,y],z] - [[x,z],y],  \  \text{for all} \  x,y,z \in \mathcal{L},\]
which is called Leibniz identity.
\end{defn}

The Leibniz identity is a generalization of the Jacobi identity, since under the condition of anti-symmetricity of the product ``[$\cdot\, ,\,\cdot$]'' this identity changes to the Jacobi identity.

For a Leibniz algebra $\mathcal{L}$, the subspace generated by its squares $\mathcal{I}= \left<\{[x,x]:  x\in \mathcal{L}\}\right>$ becomes an ideal due to Leibniz identity, and
the quotient $\widetilde{\mathcal{L}}=\mathcal{L}/\mathcal{I}$ is
a Lie algebra called the liezation of $\mathcal{L}$. Since we are focused in Leibniz algebras
which are not Lie algebras, we will always assume that
$\mathcal{I}\ne0$.

The notions of nilradical and solvable radical (denoted further by $\mathcal{N}$ and $\mathcal{R}$, respectively) are defined similarly to the case of Lie algebras (see \cite{Ayup}).
 Since in a Leibniz algebra $\mathcal{L}$ the equality $[x,[y,y]]=0$ holds for any $x,y\in \mathcal{L}$, we derive $[\mathcal{L},\mathcal{I}]=0$. Therefore,
\[\mathcal{I}\subseteq \mathcal{N} \subseteq \mathcal{R}.\]

We recall an analogue of Levi--Malcev theorem for Leibniz algebras.

\begin{thm}[\cite{Barnes}]   Let $\mathcal{L}$ be a finite-dimensional Leibniz algebra over a
field of characteristic zero and let $\mathcal{R}$ be its solvable
radical. Then there exists a semisimple Lie subalgebra
$\mathcal{S}$ of $\mathcal{L}$ such that
$\mathcal{L}=\mathcal{S}\dot{+}\mathcal{R}$.
\end{thm}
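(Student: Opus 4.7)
The plan is to mimic the classical proof of the Lie-algebra Levi theorem, inducting on $\dim\mathcal{R}$ and reducing the inductive step to splitting an abelian Leibniz extension of a semisimple Lie algebra.

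First I would observe that $\mathcal{L}/\mathcal{R}$ is a semisimple Lie algebra: since $\mathcal{I}\subseteq\mathcal{R}$, the induced bracket on $\mathcal{L}/\mathcal{R}$ is antisymmetric and satisfies the Jacobi identity, and maximality of $\mathcal{R}$ among solvable ideals ensures that $\mathcal{L}/\mathcal{R}$ has no nonzero solvable ideal. The base case $\mathcal{R}=0$ is immediate, since then $\mathcal{I}=0$, so $\mathcal{L}$ is already a semisimple Lie algebra and one takes $\mathcal{S}=\mathcal{L}$.

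For the inductive step I would choose a minimal nonzero two-sided ideal $\mathcal{M}$ of $\mathcal{L}$ with $\mathcal{M}\subseteq\mathcal{R}$. A direct computation from the Leibniz identity shows that $[\mathcal{M},\mathcal{M}]$ is again a two-sided ideal of $\mathcal{L}$; since $\mathcal{M}$ is solvable, minimality forces $[\mathcal{M},\mathcal{M}]=0$, so $\mathcal{M}$ is abelian. The quotient $\mathcal{L}/\mathcal{M}$ has solvable radical $\mathcal{R}/\mathcal{M}$ of strictly smaller dimension, so by induction there is a semisimple Lie subalgebra $\bar{\mathcal{S}}\subset\mathcal{L}/\mathcal{M}$ with $\mathcal{L}/\mathcal{M}=\bar{\mathcal{S}}\dot{+}(\mathcal{R}/\mathcal{M})$. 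Setting $\mathcal{L}_1=\pi^{-1}(\bar{\mathcal{S}})$, where $\pi\colon\mathcal{L}\to\mathcal{L}/\mathcal{M}$ is the projection, yields a Leibniz subalgebra with $\mathcal{L}_1\cap\mathcal{R}=\mathcal{M}$ and $\mathcal{L}_1/\mathcal{M}\cong\bar{\mathcal{S}}$.

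The essential remaining step, which I expect to be the main obstacle, is to split the Leibniz extension
\begin{equation*}
0\longrightarrow\mathcal{M}\longrightarrow\mathcal{L}_1\longrightarrow\bar{\mathcal{S}}\longrightarrow 0,
\end{equation*}
where $\mathcal{M}$ is an abelian two-sided ideal carrying the Leibniz bimodule structure inherited from $\mathcal{L}_1$ and $\bar{\mathcal{S}}$ is semisimple. Once such a splitting $s\colon\bar{\mathcal{S}}\to\mathcal{L}_1$ is available, its image is a semisimple Lie subalgebra $\mathcal{S}\subseteq\mathcal{L}_1\subseteq\mathcal{L}$ satisfying $\mathcal{S}\cap\mathcal{R}=\mathcal{S}\cap\mathcal{M}=0$ and $\mathcal{L}=\mathcal{L}_1+\mathcal{R}=\mathcal{S}+\mathcal{M}+\mathcal{R}=\mathcal{S}\dot{+}\mathcal{R}$, completing the induction. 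To produce the splitting I would invoke a Leibniz analogue of Whitehead's second lemma: for a semisimple Lie algebra $\mathfrak{g}$ and any finite-dimensional abelian Leibniz $\mathfrak{g}$-bimodule $M$, one has $HL^{2}(\mathfrak{g},M)=0$. This vanishing is the genuine work; I would prove it by using the identity $[\mathcal{L},\mathcal{I}]=0$, which annihilates the left action on the ``square'' part of $M$, to split $M$ into its symmetric and antisymmetric components, then applying a Casimir-operator argument to each summand to reduce to the classical vanishings $H^{1}(\mathfrak{g},-)=H^{2}(\mathfrak{g},-)=0$ for semisimple Lie algebras.
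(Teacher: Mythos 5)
Your reduction is sound: the quotient $\mathcal{L}/\mathcal{R}$ is semisimple, a minimal ideal $\mathcal{M}\subseteq\mathcal{R}$ is abelian because $[\mathcal{M},\mathcal{M}]$ is again an ideal, and induction plus a splitting of $0\to\mathcal{M}\to\mathcal{L}_1\to\bar{\mathcal{S}}\to 0$ would finish the proof. (Note the paper does not prove this theorem; it cites Barnes.) The genuine gap is in the one step you yourself flag as the real work. You need $HL^2(\mathfrak{g},M)=0$ for an \emph{arbitrary} finite-dimensional Leibniz bimodule $M$ over a semisimple Lie algebra, and your proposed proof of this begins by splitting $M$ into symmetric and antisymmetric components. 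That decomposition does not exist in general: Leibniz bimodules over a semisimple Lie algebra are not completely reducible. What is true is only that $M_0=\operatorname{span}\{x\cdot m+m\cdot x\}$ is a sub-bimodule with zero left action whose quotient is symmetric. A concrete counterexample to the splitting: take $\mathfrak{g}=\mathfrak{sl}_2$ and $M=\mathfrak{g}\oplus\mathbb{C}c$ with right action the adjoint action on $\mathfrak{g}$ and zero on $c$, and left action $x\cdot(g+tc)=tx$; the three bimodule axioms (obtained by putting the module element in each slot of the Leibniz identity) hold, $M_0=\mathfrak{g}$, and no complement of $M_0$ is stable under the left action, so $M$ is indecomposable and is not a sum of a symmetric and an antisymmetric bimodule --- indeed bimodules of exactly this kind underlie the non-conjugacy examples in this paper. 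You could repair this by using the exact sequence $0\to M_0\to M\to M/M_0\to 0$ and the long exact sequence in Leibniz cohomology, but then the two vanishings (symmetric and antisymmetric coefficients) still require real proofs; they do not follow by just quoting $H^1=H^2=0$ for Lie algebras, and indeed the analogous \emph{first} Whitehead lemma fails for antisymmetric coefficients (that failure is precisely $\Hom_{\mathcal{S}}(\mathcal{S},\mathcal{I})\neq\{0\}$, the subject of this paper). The vanishing of $HL^2$ you invoke is in fact a theorem (a Leibniz analogue of the second Whitehead lemma, due to Feldvoss and Wagemann), but it is a substantial result, so as written your proposal leaves its central step essentially unproved.

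It is also worth comparing with the much lighter route of the cited source. Barnes's argument needs no Leibniz cohomology: pass to the Lie algebra $\mathcal{L}/\mathcal{I}$, apply the classical Levi--Malcev theorem there to get a Levi subalgebra $Q/\mathcal{I}$ with $Q\subseteq\mathcal{L}$, so the only extension one ever has to split is $0\to\mathcal{I}\to Q\to Q/\mathcal{I}\to 0$, whose kernel is the Leibniz kernel itself. Since $[\mathcal{L},\mathcal{I}]=0$, each $R_k$ with $k\in\mathcal{I}$ vanishes on $Q$, and $\bar{a}\mapsto -R_a$ defines a representation of the semisimple Lie algebra $Q/\mathcal{I}$ on $Q$; by Weyl's theorem the submodule $\mathcal{I}$ admits a module complement $W$, and being a submodule means exactly $[W,Q]\subseteq W$, so $W$ is automatically a subalgebra, is a Lie algebra (its squares lie in $W\cap\mathcal{I}=0$), and satisfies $\mathcal{L}=W\dot{+}\mathcal{R}$. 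If you prefer to keep your induction, the same observation rescues it: run the induction as in the classical Lie-algebra proof so that the hard case only occurs when $\mathcal{R}$ itself is a minimal ideal; then either $\mathcal{I}=0$ (the Lie case) or $\mathcal{M}=\mathcal{I}$, and in the latter case the kernel has zero left action and the Weyl-type argument above replaces the cohomological lemma entirely.
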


The subalgebra $\mathcal{S}$ of the above theorem, similarly to Lie algebras theory, is called a \emph{Levi subalgebra} of the Leibniz algebra $\mathcal{L}$.

Let \(\mathcal{N}\) be the nilradical of a Leibniz algebra \(\mathcal{L}\).
Let \(a \in \mathcal{N}\) and $R_a$  the  right multiplication operator on $a$. We set
\[\exp(R_a)(x)=x+R_a(x)+\frac{R_a^2(x)}{2!}+\dots+\frac{R_a^n(x)}{n!}.\]

Then \(\exp(R_a)\) is an automorphism of \(\mathcal{L}\).

An automorphism obtained by composition of such type automorphisms is called an \emph{inner automorphism}.

Below, we present an adapted version to our further using  of the
well-known Schur's Lemma (\cite[p.57, Corollary 3]{Zhel}).

\begin{lm}  \label{schur}
 Let $\mathcal{G}$ be a complex Lie algebra,   $\mathcal{V}=\mathcal{V}_1\oplus\dots \oplus\mathcal{V}_m$ and $\mathcal{W}=\mathcal{W}_1\oplus\dots
\oplus\mathcal{W}_n$  completely reducible
$\mathcal{G}$-modules, where \(\mathcal{V}_1, \dots,
\mathcal{V}_n, \mathcal{W}_1,\dots ,\mathcal{W}_n\) are
irreducible modules. Then any $\mathcal{G}$-module homomorphism
$\varphi \colon \mathcal{V} \rightarrow \mathcal{W}$ can be represented
as
\[
\varphi=\sum\limits_{i=1}^m\sum\limits_{j=1}^n \lambda_{i
j}\varphi_{i j},
\]
where the operators \(\varphi_{i j} \colon \mathcal{V}_i\to\mathcal{W}_j
\)  are fixed $\mathcal{G}$-module homomorphisms  and  \(\lambda_{i j}\) are
complex numbers. Furthermore,   \(\varphi_{i j}\ne0\)  if and only
if  \(\varphi_{i j}\) is an  isomorphism. 
\end{lm}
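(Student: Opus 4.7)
My plan is to reduce the statement to the classical Schur lemma by exploiting the direct-sum decompositions of the two modules. First I would introduce the canonical inclusions $\iota_i \colon \mathcal{V}_i \hookrightarrow \mathcal{V}$ and projections $p_i \colon \mathcal{V} \twoheadrightarrow \mathcal{V}_i$, and analogously $\iota'_j \colon \mathcal{W}_j \hookrightarrow \mathcal{W}$ and $\pi_j \colon \mathcal{W} \twoheadrightarrow \mathcal{W}_j$, all of which are $\mathcal{G}$-module maps. For a given homomorphism $\varphi \colon \mathcal{V} \to \mathcal{W}$, the components $\psi_{ij} := \pi_j \circ \varphi \circ \iota_i \in \Hom_{\mathcal{G}}(\mathcal{V}_i, \mathcal{W}_j)$ recover $\varphi$ via $\varphi = \sum_{i,j} \iota'_j \circ \psi_{ij} \circ p_i$, so it is enough to analyze each $\psi_{ij}$ separately.

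Next I would apply the classical version of Schur's lemma to each $\psi_{ij}$: since $\mathcal{V}_i$ and $\mathcal{W}_j$ are irreducible, $\psi_{ij}$ is either zero or a $\mathcal{G}$-module isomorphism, and in particular it must vanish whenever $\mathcal{V}_i \not\cong \mathcal{W}_j$. For each pair $(i,j)$ with $\mathcal{V}_i \cong \mathcal{W}_j$, I fix once and for all a reference isomorphism $\varphi_{ij} \colon \mathcal{V}_i \to \mathcal{W}_j$; otherwise I set $\varphi_{ij} := 0$. When $\psi_{ij}\ne 0$, the composition $\varphi_{ij}^{-1} \circ \psi_{ij}$ is a $\mathcal{G}$-endomorphism of the irreducible complex $\mathcal{G}$-module $\mathcal{V}_i$. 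The standard eigenvector argument over $\mathbb{C}$ (any eigenspace is an invariant submodule, hence must equal $\mathcal{V}_i$) forces $\varphi_{ij}^{-1} \circ \psi_{ij} = \lambda_{ij}\,\id$ for some $\lambda_{ij} \in \mathbb{C}$, so $\psi_{ij} = \lambda_{ij}\varphi_{ij}$. Reassembling, $\varphi = \sum_{i,j} \lambda_{ij}\varphi_{ij}$ after identifying each $\varphi_{ij}$ with its extension $\iota'_j \circ \varphi_{ij} \circ p_i$ to a map $\mathcal{V}\to\mathcal{W}$, which is the decomposition claimed.

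The only conceptually nontrivial point is the one-dimensionality of $\Hom_{\mathcal{G}}(\mathcal{V}_i, \mathcal{W}_j)$ for isomorphic irreducibles, which genuinely requires the algebraic closedness of $\mathbb{C}$ to produce the eigenvalue $\lambda_{ij}$; everything else is bookkeeping with the direct-sum decompositions. The final clause ``$\varphi_{ij} \ne 0$ if and only if $\varphi_{ij}$ is an isomorphism'' is then built into the construction, since the fixed reference maps are chosen to be either zero (when $\mathcal{V}_i \not\cong \mathcal{W}_j$) or honest isomorphisms (when $\mathcal{V}_i \cong \mathcal{W}_j$).
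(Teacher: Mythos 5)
Your argument is correct and complete: the reduction to components $\pi_j\circ\varphi\circ\iota_i$, the dichotomy from classical Schur's lemma, and the eigenvalue argument over $\mathbb{C}$ (valid here since all modules are finite-dimensional) giving one-dimensionality of $\Hom_{\mathcal{G}}(\mathcal{V}_i,\mathcal{W}_j)$ for isomorphic irreducibles together yield exactly the stated decomposition with fixed reference maps $\varphi_{ij}$. The paper itself offers no proof --- it simply cites Zhelobenko (p.~57, Corollary~3) --- and your argument is precisely the standard one behind that citation, so there is nothing to object to.
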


\section{On conjugacy of Levi subalgebras of Leibniz algebras} \label{S:conjugacy}

Firstly we show that ideals $\mathcal{I},\ \mathcal{N}$ and $\mathcal{R}$ are invariant under an automorphism of a Leibniz algebra $\mathcal{L}$.

\begin{prop} \label{propideals}
\[\varphi(\mathcal{I})=\mathcal{I}, \quad \varphi(\mathcal{N})=\mathcal{N}, \quad \varphi(\mathcal{R})=\mathcal{R}, \ \text{where} \ \varphi\in \Aut(\mathcal{L}).\]
\end{prop}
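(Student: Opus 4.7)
The plan is to treat the three ideals one at a time, using in each case a characterization that is manifestly preserved by any algebra automorphism.

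First I would handle $\mathcal{I}$ directly from its definition as the subspace spanned by squares. Since $\varphi$ is linear and multiplicative, $\varphi([x,x])=[\varphi(x),\varphi(x)]$, so $\varphi$ sends each square to a square, and bijectivity of $\varphi$ gives that the set of squares is mapped onto itself. Passing to the linear spans yields $\varphi(\mathcal{I})=\mathcal{I}$. This step should be immediate and involve no obstacle.

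For $\mathcal{N}$ and $\mathcal{R}$ I would use the characterization as the largest nilpotent (resp.\ solvable) ideal. The key observation is that $\varphi$ sends ideals to ideals (because $[\varphi(\mathcal{N}),\mathcal{L}] = [\varphi(\mathcal{N}),\varphi(\mathcal{L})] = \varphi([\mathcal{N},\mathcal{L}])\subseteq \varphi(\mathcal{N})$, and analogously on the other side), and that $\varphi$ preserves the lower central series and the derived series termwise, hence preserves nilpotency and solvability. Thus $\varphi(\mathcal{N})$ is a nilpotent ideal, so by maximality $\varphi(\mathcal{N})\subseteq \mathcal{N}$; applying the same argument to $\varphi^{-1}$ gives the opposite inclusion, and the same reasoning carries over verbatim to $\mathcal{R}$.

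I do not expect any serious obstacle: the only point requiring a moment of care is verifying that $\mathcal{N}$ and $\mathcal{R}$ are well-defined as unique maximal ideals of the respective classes in the Leibniz setting, but this is essentially the content of the definitions recalled earlier in the section (following \cite{Ayup}). Once uniqueness and invariance of the relevant properties under $\varphi$ are in hand, the three equalities follow from the same maximality-plus-inverse argument.
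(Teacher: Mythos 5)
Your proof is correct and follows essentially the same route as the paper: squares map to squares for $\mathcal{I}$, and invariance of the lower central and derived series plus maximality gives $\varphi(\mathcal{N})\subseteq\mathcal{N}$ and $\varphi(\mathcal{R})\subseteq\mathcal{R}$. The only cosmetic difference is that you obtain the reverse inclusions by applying the argument to $\varphi^{-1}$, while the paper closes with a dimension count; these are equivalent.
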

\begin{proof} Let $\varphi \in \Aut(\mathcal{L})$. Then for any $[x,x]\in \mathcal{I}$ we have
$\varphi([x,x])=[\varphi(x),\varphi(x)]\in \mathcal{I}$. This implies $\varphi (\mathcal{I})\subseteq \mathcal{I}$. Now for $x=\sum \alpha_i[x_i,x_i]\in \mathcal{I}$
we have the existence of $y_i\in \mathcal{I}$ such that $\varphi(y_i)=x_i$. Due to
$x=\sum \alpha_i[x_i,x_i]=\sum \alpha_i[\varphi(y_i),\varphi(y_i)]=\varphi(\sum \alpha_i[y_i, y_i])$,
we conclude $x=\varphi(y)$, where $y=\sum \alpha_i[y_i, y_i]\in \mathcal{I}$. Thus, $\varphi(\mathcal{I})=\mathcal{I}$.

For any $k\in \mathbb{N}$ we have
\[\varphi(\mathcal{N})^k=\varphi(\mathcal{N}^k), \qquad \varphi(\mathcal{R})^{[k]}=\varphi(\mathcal{R}^{[k]}),\]
where \quad $\mathcal{N}^1=\mathcal{N}, \ \mathcal{N}^{k+1}=[\mathcal{N}^k,\mathcal{N}],   \qquad
\mathcal{R}^{[1]}=\mathcal{R}, \ \mathcal{R}^{[k+1]}=[\mathcal{R}^{[k]},\mathcal{R}^{[k]}], \quad k \geq 1$,

\noindent are  the \emph{lower central} and the \emph{derived series} of $\mathcal{N}$ and  $\mathcal{R}$, respectively.
Consequently, $\varphi(\mathcal{N})$ and $\varphi(\mathcal{R})$ are nilpotent and solvable ideals, respectively; that is,
 $\varphi(\mathcal{N})\subseteq \mathcal{N}$ and $\varphi(\mathcal{R})\subseteq \mathcal{R}$. Applying dimensional arguments we complete the proof of the proposition.
\end{proof}

Let $\mathcal{L}=\mathcal{S}\dot{+} \mathcal{R}$ be a Leibniz algebra and let \(\varphi\) be an automorphism of \(\mathcal{L}\).
 Due to Proposition~\ref{propideals} we get $\varphi = \varphi_{\mathcal{S}, \mathcal{S}} +\varphi_{\mathcal{S},\mathcal{R}}+\varphi_{\mathcal{R},\mathcal{R}}$.

Denote by \(\Hom_\mathcal{S}(\mathcal{S}, \mathcal{I})\)
the set of all \(\mathcal{S}\)-module homomorphisms from
\(\mathcal{S}\) into \(\mathcal{I}\).

We set
\begin{equation}\label{test}
\mathcal{S}_\theta=\left\{x+\theta(x): x\in \mathcal{S}, \theta\in \Hom_\mathcal{S}(\mathcal{S},
\mathcal{I})\right\}.
\end{equation}

Since
\[[x+\theta(x), y+\theta(y)]=[x, y]+[\theta(x), y]=[x,
y]+\theta([x, y]),\] it follows that the mapping \(x \mapsto
x+\theta(x)\in \mathcal{S}_\theta\), $x\in \mathcal{S}$, is an automorphism from
\(\mathcal{S}\) onto \(\mathcal{S}_\theta\).

\begin{rem} Since the proofs of the results for the automorphisms of the form $\exp(R_a)$ can easily extended to the case of inner automorphisms,
we shall further consider only automorphisms of the form $\exp(R_a)$.
\end{rem}

\begin{thm} \label{thmmain}
  Let $\mathcal{S}$ and $\mathcal{S}_1$ be two Levi subalgebras of a Leibniz algebra $\mathcal{L}$.
Then there exist \(\tau \in \Hom_\mathcal{S}(\mathcal{S}, \mathcal{I})\) and an
element \(a\in \mathcal{N}\) such that \(\exp(R_a)(\mathcal{S}_\tau)=\mathcal{S}_1\).
\end{thm}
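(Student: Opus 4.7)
My plan is to reduce the problem to the classical Levi--Malcev theorem for Lie algebras by working modulo $\mathcal{I}$ and then lifting the resulting inner automorphism back to $\mathcal{L}$. The point is that $\mathcal{L}/\mathcal{I}$ is a Lie algebra, $\mathcal{R}/\mathcal{I}$ is its solvable radical, and both $\overline{\mathcal{S}}:=(\mathcal{S}+\mathcal{I})/\mathcal{I}$ and $\overline{\mathcal{S}}_1:=(\mathcal{S}_1+\mathcal{I})/\mathcal{I}$ are Levi subalgebras of $\mathcal{L}/\mathcal{I}$, isomorphic to $\mathcal{S}$ and $\mathcal{S}_1$ respectively since $\mathcal{S}\cap\mathcal{I}=\mathcal{S}_1\cap\mathcal{I}=0$. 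The essential gap between $\mathcal{S}_1$ and a suitable twist $\mathcal{S}_\tau$ of $\mathcal{S}$ will then live in $\mathcal{I}$.

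First I would invoke the classical Levi--Malcev theorem in $\mathcal{L}/\mathcal{I}$ to produce an inner automorphism $\overline{\varphi}$ with $\overline{\varphi}(\overline{\mathcal{S}}_1)=\overline{\mathcal{S}}$. By Malcev's construction, $\overline{\varphi}$ is a composition of automorphisms of the form $\exp(\mathrm{ad}_{\overline{n}})$ with $\overline{n}$ in the nilradical of $\mathcal{L}/\mathcal{I}$, which is $\mathcal{N}/\mathcal{I}$. Each such $\overline{n}$ lifts to an element $n\in\mathcal{N}$. Since $[\mathcal{L},\mathcal{I}]=0$, the operator $R_n$ on $\mathcal{L}$ descends to $R_{\overline{n}}=-\mathrm{ad}_{\overline{n}}$ on the Lie algebra $\mathcal{L}/\mathcal{I}$, so $\exp(R_n)$ reduces modulo $\mathcal{I}$ to $\exp(-\mathrm{ad}_{\overline{n}})$. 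Composing with appropriate sign choices, I obtain an inner automorphism $\varphi=\exp(R_{a_1})\circ\cdots\circ\exp(R_{a_k})$ of $\mathcal{L}$ (with $a_i\in\mathcal{N}$) whose reduction modulo $\mathcal{I}$ sends $\overline{\mathcal{S}}_1$ to $\overline{\mathcal{S}}$; by the Remark preceding the theorem it suffices to argue with inner automorphisms and one can collapse to a single $\exp(R_a)$ in the end.

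Set $\mathcal{S}':=\varphi(\mathcal{S}_1)$. Since $\mathcal{S}'+\mathcal{I}=\mathcal{S}+\mathcal{I}$ and $\dim\mathcal{S}'=\dim\mathcal{S}$, for every $x\in\mathcal{S}$ there is a unique element $\tau(x)\in\mathcal{I}$ with $x+\tau(x)\in\mathcal{S}'$, using the direct sum decomposition $\mathcal{L}=\mathcal{S}\dot{+}\mathcal{R}$ and $\mathcal{I}\subseteq\mathcal{R}$. The resulting map $\tau\colon\mathcal{S}\to\mathcal{I}$ is linear, and to verify $\tau\in\Hom_{\mathcal{S}}(\mathcal{S},\mathcal{I})$ I would compute, using $[\mathcal{L},\mathcal{I}]=0$,
\[
[x+\tau(x),y+\tau(y)]=[x,y]+[\tau(x),y],
\]
for $x,y\in\mathcal{S}$. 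Since the left side lies in $\mathcal{S}'=\mathcal{S}_\tau$, it must also equal $[x,y]+\tau([x,y])$, and uniqueness of the decomposition forces $\tau([x,y])=[\tau(x),y]$, exactly the $\mathcal{S}$-module property. Thus $\mathcal{S}'=\mathcal{S}_\tau$, hence $\mathcal{S}_1=\varphi^{-1}(\mathcal{S}_\tau)$, and $\varphi^{-1}$ is inner of the required form (up to the standard observation that a composition of $\exp(R_{a_i})$ is inner).

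The main obstacle is the lifting step: one must check that an inner automorphism in the liezation actually comes from an inner automorphism of $\mathcal{L}$ built from elements of $\mathcal{N}$, and that the exponentials $\exp(R_n)$ are genuine automorphisms terminating after finitely many steps. The first is controlled by the fact that $\mathcal{N}/\mathcal{I}$ is the nilradical of $\mathcal{L}/\mathcal{I}$, and the second by the nilpotency of $R_n$ for $n\in\mathcal{N}$ (which was the reason $\exp(R_a)$ was defined in the preliminaries). After these checks the rest of the argument is bookkeeping using $[\mathcal{L},\mathcal{I}]=0$ and the unique decomposition of elements along $\mathcal{S}\dot{+}\mathcal{R}$.
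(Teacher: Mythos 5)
Your argument is essentially the paper's own proof: pass to the liezation $\mathcal{L}/\mathcal{I}$, apply the classical Levi--Malcev theorem there, lift the resulting exponential by choosing a representative $a\in\mathcal{N}$ of the class $\widetilde{a}\in\mathcal{N}/\mathcal{I}$, and record the remaining discrepancy (which necessarily lies in $\mathcal{I}$) as a map $\tau$, verified to be an $\mathcal{S}$-module homomorphism by the same computation $[x+\tau(x),y+\tau(y)]=[x,y]+[\tau(x),y]$, so that $\mathcal{S}_1=\exp(R_a)(\mathcal{S}_\tau)$. The only point to tighten is your claim that a composition of several $\exp(R_{a_i})$ ``can be collapsed to a single $\exp(R_a)$,'' which is not justified in general, but it is also unnecessary: Malcev's theorem already gives conjugacy in the quotient by a single $\exp(\mathrm{ad}_{\widetilde{a}})=\exp(R_{-\widetilde{a}})$ with $\widetilde{a}$ in the nilradical of the quotient, and this single exponential is exactly what the paper lifts.
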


\begin{proof} Let \(\widetilde{\mathcal{L}}=\mathcal{S}\dot{+}
\widetilde{\mathcal{R}},\) where
\(\widetilde{\mathcal{R}}=\mathcal{R}/\mathcal{I}.\) Due to
Levi--Malcev theorem for Lie algebras (see \cite{Malcev}), there
exists an element \(\widetilde{a}\in
\widetilde{\mathcal{N}}=\mathcal{N}/\mathcal{I}\) such that
\(\exp(R_{\widetilde{a}})(\mathcal{S})=\mathcal{S}_1\). For an
element \(a\) from the class \(\widetilde{a}\), take an
automorphism \(\exp(R_a)\) of \(\mathcal{L}\).

Let \(\pi \colon \mathcal{L}\to \widetilde{\mathcal{L}}\) be the quotient
map. Since any automorphism maps \(\mathcal{I}\) into itself, the
quotient map \(\widetilde{\exp(R_a)}\) is well defined, i.e.
\[
\widetilde{\exp(R_a)}(\pi(x))=\pi\left(\exp(R_a)(x)\right),\, x\in
\mathcal{L}.
\]
Then \(\widetilde{\exp(R_a)}\) is an automorphism of
\(\widetilde{\mathcal{L}}\). For \(x\in \mathcal{S}\) we have
\begin{align*}
\widetilde{\exp(R_a)}(x) & =
\widetilde{\exp(R_a)}(\pi(x))=\pi\left(\exp(R_a)(x)\right)=\pi\left(\sum\limits_{k=0}^n\frac{R_a^k(x)}{k!}\right)\\
{} & = \sum\limits_{k=0}^n\frac{R_{\pi(a)}^k(\pi(x))}{k!}=\sum\limits_{k=0}^n\frac{R_{\widetilde{a}}^k(x)}{k!}=
\exp(R_{\widetilde{a}})(x).
\end{align*}
Therefore,
$\widetilde{\exp(R_a)}\big|_{\mathcal{S}}=\left.\exp(R_{\widetilde{a}})\right|_{\mathcal{S}}$.

Thus, for every \(x\in \mathcal{S}\) there exists an element
\(\tau(x)\in \mathcal{I}\) such that
\[
\exp(R_a)^{-1}\left(\exp(R_{\widetilde{a}})(x)\right)=x+\tau(x).
\]

For \(x,y \in \mathcal{S}\) we consider
\begin{align*}
[x, y]+\tau([x, y]) & =
\exp(R_a)^{-1}\left(\exp(R_{\widetilde{a}})([x,
y])\right)\\
{}&= \left[\exp(R_a)^{-1}\left(\exp(R_{\widetilde{a}})(x)\right),
\exp(R_a)^{-1}\left(\exp(R_{\widetilde{a}})(y)\right)\right]\\
{} &= \left[x+\tau(x), y+\tau(y)\right]=[x,y]+[\tau(x), y].
\end{align*}

Consequently, \(\tau \in \Hom_\mathcal{S}(\mathcal{S},\mathcal{I})\).
Moreover,
\[
\exp(R_{\widetilde{a}})(x)=\exp(R_a)(x+\tau(x)) \ \text{for all} \ x\in \mathcal{S}
\]
implies $\mathcal{S}_1=\exp(R_a)(\mathcal{S}_{\tau})$.
\end{proof}

The following result follows from Theorem~\ref{thmmain} and it asserts that the question on the conjugation of Levi subalgebras is reduced
to verifying of the conjugation of Levi subalgebras \(\mathcal{S}\) and \(\mathcal{S}_\theta\), where the Levi subalgebra \(\mathcal{S}_\theta\) is an algebra of the
form~\eqref{test}.

\begin{cor} \label{cormain}  A Levi subalgebra \(\mathcal{S}\) is unique up to conjugation by
an inner automorphism of \(\mathcal{L}\) if and only if for every \(\tau\in\Hom_\mathcal{S}(\mathcal{S}, \mathcal{I})\)
 there exists an element \(b\in \mathcal{N}\) such that \(\left.\exp(R_b) \right|_\mathcal{S}=\id_\mathcal{S}+\tau\).
\end{cor}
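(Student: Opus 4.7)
The plan is to deduce both implications from Theorem~\ref{thmmain}, combined with one auxiliary observation. For any $b\in\mathcal{N}$ and any $x\in\mathcal{S}$, the element $R_b(x)=[x,b]$ lies in the ideal $\mathcal{N}$, and iterating, $R_b^{k}(x)\in\mathcal{N}\subseteq\mathcal{R}$ for every $k\ge 1$. Consequently $\exp(R_b)(x)\equiv x\pmod{\mathcal{R}}$ for each $x\in\mathcal{S}$. This simple fact is what will turn the set-level equality $\exp(R_b)(\mathcal{S})=\mathcal{S}_\tau$ into the pointwise identity $\exp(R_b)|_\mathcal{S}=\id_\mathcal{S}+\tau$ in the delicate direction of the equivalence.

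For the direction ``$\Leftarrow$'', I would take an arbitrary Levi subalgebra $\mathcal{S}_1$. Theorem~\ref{thmmain} yields $\tau\in\Hom_\mathcal{S}(\mathcal{S},\mathcal{I})$ and $a\in\mathcal{N}$ with $\exp(R_a)(\mathcal{S}_\tau)=\mathcal{S}_1$. The hypothesis of the corollary supplies $b\in\mathcal{N}$ with $\exp(R_b)|_\mathcal{S}=\id_\mathcal{S}+\tau$, so in particular $\exp(R_b)(\mathcal{S})=\mathcal{S}_\tau$. The composition $\exp(R_a)\circ\exp(R_b)$ is then an inner automorphism carrying $\mathcal{S}$ onto $\mathcal{S}_1$, which is exactly what uniqueness up to inner conjugation requires.

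For the direction ``$\Rightarrow$'', I would fix $\tau\in\Hom_\mathcal{S}(\mathcal{S},\mathcal{I})$. The discussion preceding Theorem~\ref{thmmain} already shows that $x\mapsto x+\tau(x)$ is a Lie algebra isomorphism from $\mathcal{S}$ onto $\mathcal{S}_\tau$, so $\mathcal{S}_\tau$ is semisimple; combined with $\tau(\mathcal{S})\subseteq\mathcal{I}\subseteq\mathcal{R}$ and $\mathcal{S}\cap\mathcal{R}=\{0\}$, a quick check gives $\mathcal{L}=\mathcal{S}_\tau\dot{+}\mathcal{R}$, so $\mathcal{S}_\tau$ is a Levi subalgebra of $\mathcal{L}$. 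The uniqueness hypothesis, together with the convention of the Remark which reduces matters to inner automorphisms of the form $\exp(R_b)$, then produces $b\in\mathcal{N}$ with $\exp(R_b)(\mathcal{S})=\mathcal{S}_\tau$. For each $x\in\mathcal{S}$ I write $\exp(R_b)(x)=y+\tau(y)$ with $y\in\mathcal{S}$; the auxiliary observation above gives $\exp(R_b)(x)-x\in\mathcal{R}$, while also $\tau(y)\in\mathcal{R}$, so $x-y\in\mathcal{R}\cap\mathcal{S}=\{0\}$, forcing $y=x$ and hence $\exp(R_b)(x)=x+\tau(x)$, as required.

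The only substantive point, and the only place where things could go wrong, lies in the ``$\Rightarrow$'' direction: a priori the inner automorphism delivered by the uniqueness hypothesis might act on $\mathcal{S}$ as $\sigma+\tau\circ\sigma$ for some nontrivial $\sigma\in\Aut(\mathcal{S})$ rather than as $\id_\mathcal{S}+\tau$. The congruence $\exp(R_b)\equiv\id\pmod{\mathcal{R}}$ on $\mathcal{S}$ is exactly what rules out this twist and pins $\sigma$ down to be the identity.
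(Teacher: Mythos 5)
Your proof is correct and follows the route the paper intends: the paper states Corollary~\ref{cormain} without an explicit proof, as a direct consequence of Theorem~\ref{thmmain}, and your argument is the natural elaboration of that deduction (with the $\Rightarrow$ direction read, as you note, under the paper's Remark identifying inner automorphisms with those of the form $\exp(R_b)$, $b\in\mathcal{N}$). The observation that $\exp(R_b)\equiv\id \pmod{\mathcal{R}}$ on $\mathcal{S}$, which upgrades the set equality $\exp(R_b)(\mathcal{S})=\mathcal{S}_\tau$ to the pointwise identity $\left.\exp(R_b)\right|_{\mathcal{S}}=\id_{\mathcal{S}}+\tau$, is precisely the detail the paper leaves implicit.
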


In the next two propositions we give sufficient conditions to conjugations of Levi subalgebras.

\begin{prop} \label{homzero}
  Let $\mathcal{S}$ be a Levi subalgebra of a Leibniz algebra $\mathcal{L}$ and \(\Hom_\mathcal{S}(\mathcal{S}, \mathcal{I})=\{0\}\).
 Then \(\mathcal{S}\) is unique up to conjugation by an inner automorphism of \(\mathcal{L}\).
\end{prop}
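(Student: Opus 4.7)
The plan is to apply Theorem~\ref{thmmain} directly; the hypothesis $\Hom_\mathcal{S}(\mathcal{S}, \mathcal{I}) = \{0\}$ is designed precisely to make the auxiliary map $\tau$ in that theorem vanish, which collapses the twisted subalgebra $\mathcal{S}_\tau$ back to $\mathcal{S}$ itself.

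First, I would let $\mathcal{S}_1$ be an arbitrary Levi subalgebra of $\mathcal{L}$. By Theorem~\ref{thmmain}, there exist $\tau \in \Hom_\mathcal{S}(\mathcal{S}, \mathcal{I})$ and an element $a \in \mathcal{N}$ such that
\[
\exp(R_a)(\mathcal{S}_\tau) = \mathcal{S}_1.
\]
Next, I would invoke the hypothesis: since $\Hom_\mathcal{S}(\mathcal{S}, \mathcal{I}) = \{0\}$, the homomorphism $\tau$ must be zero. Recalling the definition~\eqref{test} of $\mathcal{S}_\tau$, namely $\mathcal{S}_\tau = \{x + \tau(x) : x \in \mathcal{S}\}$, we have $\mathcal{S}_\tau = \mathcal{S}$. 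Therefore $\exp(R_a)(\mathcal{S}) = \mathcal{S}_1$, which exhibits $\mathcal{S}_1$ as the image of $\mathcal{S}$ under an inner automorphism, as required.

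There is no real obstacle here: all of the work has already been absorbed into Theorem~\ref{thmmain}, whose proof reduced the conjugacy problem modulo $\mathcal{I}$ (handled by the classical Levi--Malcev theorem for the liezation $\widetilde{\mathcal{L}}$) to the existence of a single $\mathcal{S}$-module homomorphism $\tau \colon \mathcal{S}\to \mathcal{I}$. Thus Proposition~\ref{homzero} is essentially an immediate corollary, and the only thing to be careful about is to cite the correct theorem and to note explicitly that $\mathcal{S}_0 = \mathcal{S}$.
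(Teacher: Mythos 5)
Your argument is correct and coincides with the paper's own proof: both simply apply Theorem~\ref{thmmain}, note that the hypothesis forces $\tau=0$ so that $\mathcal{S}_\tau=\mathcal{S}$, and conclude $\exp(R_a)(\mathcal{S})=\mathcal{S}_1$. Nothing further is needed.
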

\begin{proof} Taking into account the condition \(\Hom_\mathcal{S}(\mathcal{S},\mathcal{I})=\{0\}\) in
Theorem~\ref{thmmain}, we obtain \(\mathcal{S}_\tau=\mathcal{S}\) and therefore \(\exp(R_a)(\mathcal{S})=\mathcal{S}_1\).
\end{proof}

\begin{rem}
It should be noted that the equality $[\mathcal{I},\mathcal{S}]=0$ implies $\Hom_\mathcal{S}(\mathcal{S},\mathcal{I})=\{0\}$.
 Therefore, due to Proposition~\ref{homzero} we conclude that Levi subalgebras of a Leibniz algebra $\mathcal{L}=\mathcal{S}\dot{+}\mathcal{R}$
 satisfying the condition $[\mathcal{I},\mathcal{S}]=\{0\}$ are conjugate via an inner automorphism.

\end{rem}

Thanks to Proposition~\ref{homzero}, henceforth we will consider only the case when \(\Hom_\mathcal{S}(\mathcal{S}, \mathcal{I})\neq \{0\}\).

Let \(\mathcal{L}=\mathcal{S}\dot{+}\mathcal{R}\) be a Leibniz
algebra.  A Levi subalgebra \(\mathcal{S}\) and
\(\mathcal{S}\)-module \(\mathcal{I}\) can be uniquely represented
as
\[ \mathcal{S}=\mathcal{G}\oplus \mathcal{Q} \quad  \text{and} \quad  \mathcal{I}=\mathcal{J}\oplus \mathcal{K}\]
such that
\[
\Hom_\mathcal{S}(\mathcal{S},
\mathcal{I})\equiv \Hom_\mathcal{S}(\mathcal{G},
\mathcal{J}) \quad  \text{and} \quad   \Hom_\mathcal{S}(\mathcal{S},
\mathcal{K})=\{0\}.
\]

Let \(\mathcal{L}=\mathcal{S}\dot{+}\mathcal{R}\) be a Levi decomposition of a Leibniz
algebra $\mathcal{L}$ such that \([\mathcal{J}, \mathcal{R}]=0\).

For \(\theta\in \Hom_\mathcal{S}(\mathcal{S}, \mathcal{I})\) we
set
\[
\delta_\theta(x_\mathcal{S}+x_\mathcal{R})=\theta(x_\mathcal{S}),\quad
x=x_\mathcal{S}+x_\mathcal{R}\in \mathcal{S}\dot{+}\mathcal{R}.
\]
Then \(\delta_\theta\) is a derivation of \(\mathcal{L}\). Indeed,
\begin{align*}
\left[\delta_\theta(x), y\right]+\left[x, \delta_\theta(y)\right]
& =  [\theta(x_\mathcal{S}), y_\mathcal{S}+y_\mathcal{R}]+[x,
\theta(y_\mathcal{S})]\\
{}&= [\theta(x_\mathcal{S}), y_\mathcal{S}]+[\theta(x_\mathcal{S}),
y_\mathcal{R}]=[\theta(x_\mathcal{S}), y_\mathcal{S}] \\
{}& = \theta([x_\mathcal{S}, y_\mathcal{S}])=\delta_\theta([x,
y]).
\end{align*}

It is clear that \(\delta_\theta^2=0\). Hence,
$\exp(\delta_\theta)(x)=x+\delta_\theta(x),\, x\in \mathcal{L}$
and $\exp(\delta_\theta)(x)=x+\theta(x),\, x\in \mathcal{S}$.
Thus, we obtain \[\exp(\delta_\theta)(\mathcal{S})=\mathcal{S}_\theta.\]

Consider a nilpotent derivation of the form:
\begin{equation}\label{nilder}
D=R_a+\delta_\theta, \quad  a\in \mathcal{N}, \quad  \theta\in \Hom_\mathcal{S}(\mathcal{S}, \mathcal{I}).
\end{equation}

\begin{prop} \label{conauto}
  Let $\mathcal{L}=\mathcal{S}\dot{+}\mathcal{R}$ be a Levi
decomposition of a Leibniz algebra $\mathcal{L}$ such that \([\mathcal{J}, \mathcal{R}]=\{0\}\).
Then any two Levi subalgebras $\mathcal{S}$ and $\mathcal{S}_1$ are conjugate by \(\exp(D)\), where \(D\) has the form \eqref{nilder}.
\end{prop}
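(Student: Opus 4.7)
The plan is to combine Theorem~\ref{thmmain} with the derivation $\delta_\theta$ introduced just before the statement. By Theorem~\ref{thmmain}, fix $\tau\in\Hom_\mathcal{S}(\mathcal{S},\mathcal{I})$ and $a\in\mathcal{N}$ such that $\exp(R_a)(\mathcal{S}_\tau)=\mathcal{S}_1$. Since $\Hom_\mathcal{S}(\mathcal{S},\mathcal{I})\equiv\Hom_\mathcal{S}(\mathcal{G},\mathcal{J})$, the image of $\tau$ lies in $\mathcal{J}$, and hence the hypothesis $[\mathcal{J},\mathcal{R}]=0$ justifies the calculation shown above that $\delta_\tau$ is a derivation with $\delta_\tau^2=0$ and $\exp(\delta_\tau)(\mathcal{S})=\mathcal{S}_\tau$.

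Setting $D=R_a+\delta_\tau$, the first step is to verify that $D$ is a nilpotent derivation, so that $\exp(D)$ is an inner-type automorphism of the required form~\eqref{nilder}. That $D$ is a derivation is immediate since both summands are. For nilpotency I would argue that the two summands actually commute, which then yields nilpotency of the sum and, crucially, the splitting $\exp(D)=\exp(R_a)\exp(\delta_\tau)$ without any Baker--Campbell--Hausdorff correction.

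The key computation, which I expect to be the heart of the proof, is
\[
[R_a,\delta_\tau](x)=[\delta_\tau(x),a]-\delta_\tau([x,a])=0
\]
for every $x\in\mathcal{L}$. For the first term, $\delta_\tau(x)=\tau(x_\mathcal{S})\in\mathcal{J}$ and $a\in\mathcal{N}\subseteq\mathcal{R}$, so $[\delta_\tau(x),a]=0$ by the hypothesis $[\mathcal{J},\mathcal{R}]=0$. For the second term, $[x,a]\in\mathcal{R}$ (because $\mathcal{R}$ is an ideal and $a\in\mathcal{R}$), hence $([x,a])_\mathcal{S}=0$ and therefore $\delta_\tau([x,a])=0$. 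This is the step where the hypothesis is used in an essential way.

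Once commutativity is in hand, the proof is immediate:
\[
\exp(D)(\mathcal{S})=\exp(R_a)\bigl(\exp(\delta_\tau)(\mathcal{S})\bigr)=\exp(R_a)(\mathcal{S}_\tau)=\mathcal{S}_1,
\]
showing that $\mathcal{S}$ and $\mathcal{S}_1$ are conjugate by $\exp(D)$ with $D$ of the form~\eqref{nilder}. The only subtlety to keep an eye on is that $\tau$ is genuinely valued in $\mathcal{J}$, which is ensured by the decomposition $\mathcal{I}=\mathcal{J}\oplus\mathcal{K}$ with $\Hom_\mathcal{S}(\mathcal{S},\mathcal{K})=\{0\}$; without this one cannot invoke $[\mathcal{J},\mathcal{R}]=0$ in the commutator computation above.
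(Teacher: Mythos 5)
Your proof is correct and follows essentially the same route as the paper: apply Theorem~\ref{thmmain} to obtain $\tau$ and $a$, use $[\mathcal{J},\mathcal{R}]=\{0\}$ (and the fact that $\tau$ takes values in $\mathcal{J}$) to make $\delta_\tau$ a square-zero derivation with $\exp(\delta_\tau)(\mathcal{S})=\mathcal{S}_\tau$, and set $D=R_a+\delta_\tau$. The only difference is that you explicitly verify $[R_a,\delta_\tau]=0$ to justify the factorization $\exp(D)=\exp(R_a)\circ\exp(\delta_\tau)$, a step the paper uses without comment.
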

\begin{proof}
By Theorem~\ref{thmmain} there exist \(\tau \in
\Hom_\mathcal{S}(\mathcal{S}, \mathcal{I})\) and an
element \(a\in \mathcal{N}\) such that
\(\exp(R_a)(\mathcal{S}_\tau)=\mathcal{S}_1\). Since
\([\mathcal{J}, \mathcal{R}]=0\), it follows that \(\delta_\tau\)
is a derivation with $\delta^2_\tau=0$. For a derivation  \(D=R_a+\delta_\tau\), we have
\begin{align*}
\exp(D)(\mathcal{S}) & =
\exp(R_a+\delta_\tau)(\mathcal{S})=(\exp(R_a)\circ
\exp(\delta_\tau))(\mathcal{S})\\
{}&=
\exp(R_a)(\exp(\delta_\tau)(\mathcal{S}))=\exp(R_a)(\mathcal{S}_\tau)=\mathcal{S}_1.
\end{align*}
The proof is complete.
\end{proof}

 Note that the example presented in the paper \cite{Barnes} satisfies the conditions of Proposition~\ref{conauto}.

In the next two propositions consider the cases of Leibniz algebras with Levi subalgebras which are not conjugate via any automorphism.

\begin{prop} \label{notconj} Let \(\mathcal{L}=\mathcal{S}\dot{+}
\mathcal{R}\) be a Levi decomposition with conditions
\([\mathcal{S}, \mathcal{R}]=\{0\}\) and \([\mathcal{J},
\mathcal{R}]\neq \{0\}\). Then there exists a Levi subalgebra
$\mathcal{S}_1\dot{+} \mathcal{R}$ such that the algebras
\(\mathcal{S}\) and \(\mathcal{S}_1\) are not conjugate via any
automorphism.
\end{prop}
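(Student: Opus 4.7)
The plan is to exhibit a Levi subalgebra $\mathcal{S}_1$ of the form $\mathcal{S}_\theta$ from~\eqref{test} for a carefully chosen $\theta\in \Hom_\mathcal{S}(\mathcal{S},\mathcal{J})$, and then rule out conjugation by any automorphism by comparing $[\mathcal{S},\mathcal{R}]$ with $[\mathcal{S}_1,\mathcal{R}]$. The guiding observation is that the vanishing of $[\mathcal{S},\mathcal{R}]$ is an invariant of the $\Aut(\mathcal{L})$-orbit of a Levi subalgebra, because Proposition~\ref{propideals} ensures that every automorphism of $\mathcal{L}$ preserves both $\mathcal{S}$-type data and the solvable radical $\mathcal{R}$.

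The first step is to build $\theta$ with image concentrated in the right place. By Weyl's theorem on complete reducibility, decompose $\mathcal{J}=\bigoplus_\alpha \mathcal{J}_\alpha$ into irreducible $\mathcal{S}$-submodules. Since $[\mathcal{J},\mathcal{R}]\neq\{0\}$, at least one summand $\mathcal{J}_{\alpha_0}$ satisfies $[\mathcal{J}_{\alpha_0},\mathcal{R}]\neq\{0\}$. By the defining property of the splitting $\mathcal{I}=\mathcal{J}\oplus\mathcal{K}$ (namely $\Hom_\mathcal{S}(\mathcal{S},\mathcal{K})=\{0\}$), the irreducible module $\mathcal{J}_{\alpha_0}$ must be isomorphic, as an $\mathcal{S}$-module, to some simple ideal $\mathcal{S}_{i_0}\subseteq\mathcal{S}$; composing the projection $\mathcal{S}\twoheadrightarrow\mathcal{S}_{i_0}$ with this isomorphism yields an $\mathcal{S}$-linear surjection $\theta\colon\mathcal{S}\to\mathcal{J}_{\alpha_0}$. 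Setting $\mathcal{S}_1:=\mathcal{S}_\theta$, the computation following~\eqref{test} shows $\mathcal{S}_1$ is a Levi subalgebra isomorphic to $\mathcal{S}$ and complementary to $\mathcal{R}$.

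The second step is a direct calculation. Using $[\mathcal{S},\mathcal{R}]=\{0\}$ and $\theta(\mathcal{S})=\mathcal{J}_{\alpha_0}$, one has
\[
[\mathcal{S}_1,\mathcal{R}]=\bigl\{[s+\theta(s),r]:s\in\mathcal{S},\,r\in\mathcal{R}\bigr\}=[\theta(\mathcal{S}),\mathcal{R}]=[\mathcal{J}_{\alpha_0},\mathcal{R}]\neq\{0\}.
\]
If some $\varphi\in\Aut(\mathcal{L})$ satisfied $\varphi(\mathcal{S})=\mathcal{S}_1$, then by Proposition~\ref{propideals} also $\varphi(\mathcal{R})=\mathcal{R}$, so
\[
[\mathcal{S}_1,\mathcal{R}]=[\varphi(\mathcal{S}),\varphi(\mathcal{R})]=\varphi\bigl([\mathcal{S},\mathcal{R}]\bigr)=\{0\},
\]
contradicting the previous display. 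Thus $\mathcal{S}$ and $\mathcal{S}_1$ are not conjugate by any automorphism.

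The main obstacle is the selection of $\theta$: one must produce a homomorphism in $\Hom_\mathcal{S}(\mathcal{S},\mathcal{I})$ whose image sits inside a prescribed irreducible component of $\mathcal{J}$ that interacts nontrivially with $\mathcal{R}$. This is precisely where the structural decomposition $\mathcal{I}=\mathcal{J}\oplus\mathcal{K}$ and the Schur-type comparison with the simple summands of $\mathcal{S}$ are indispensable. Once $\theta$ is in hand, the remaining verification---the subalgebra property of $\mathcal{S}_\theta$, the bracket computation $[\mathcal{S}_1,\mathcal{R}]=[\mathcal{J}_{\alpha_0},\mathcal{R}]$, and the $\Aut(\mathcal{L})$-invariance of $\mathcal{R}$---is routine.
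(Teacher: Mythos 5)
Your proof is correct and follows essentially the same route as the paper: both construct $\mathcal{S}_1=\mathcal{S}_\theta$ for a $\theta\in\Hom_\mathcal{S}(\mathcal{S},\mathcal{J})$ whose image has nonzero bracket with $\mathcal{R}$, and then use $\varphi(\mathcal{R})=\mathcal{R}$ (Proposition~\ref{propideals}) together with $[\mathcal{S},\mathcal{R}]=\{0\}$ to obtain a contradiction. Your packaging of the contradiction as the $\Aut(\mathcal{L})$-invariance of the condition $[\mathcal{S},\mathcal{R}]=\{0\}$, and your explicit justification via the irreducible decomposition of $\mathcal{J}$ that a suitable $\theta$ exists (the paper merely asserts such a $\theta$ and argues element-wise with the components of $\varphi$), are a slightly cleaner rendering of the same idea.
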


\begin{proof}
Because of \([\mathcal{J}, \mathcal{R}]\neq \{0\}\) we have the
existence of elements \(i\in \mathcal{J}\) and \(y\in
\mathcal{R}\) such \([i, y]\neq 0\).

Consider an $\mathcal{S}$-module homomorphism $\theta$ from $\mathcal{S}$ to $\mathcal{J}$
such that there exists $x\in \mathcal{S}$ with the condition $\theta(x)=i$.

We set
\[ \mathcal{S}_1=\left\{x+\theta(x): x\in \mathcal{S}\right\}.\]

Let assume the existence of an automorphism $\varphi=\varphi_{\mathcal{S},\mathcal{S}}+\varphi_{\mathcal{S},\mathcal{R}}+\varphi_{\mathcal{R},\mathcal{R}} \in \Aut(\mathcal{L})$
 such that $\varphi(\mathcal{S})=\mathcal{S}_1$.

For \(x\in \mathcal{S}\) we have $\varphi_{\mathcal{S},\mathcal{S}}(x)+\varphi_{\mathcal{S},\mathcal{R}}(x)=\varphi(x)\in
\mathcal{S}_1\subset \mathcal{S}+\mathcal{J}$.

Hence \(\varphi_{\mathcal{S},\mathcal{R}}(x)\in \mathcal{J}\) and \(\varphi_{\mathcal{S},\mathcal{R}}=\varphi_{\mathcal{S},\mathcal{J}}\).

Since $\varphi(\mathcal{R})=\mathcal{R}$, then there exists $z\in \mathcal{R}$ such that $\varphi_{\mathcal{R},\mathcal{R}}(z)=y$.

Taking into account in the following chain of equalities \([\mathcal{S}, \mathcal{R}]=0\),
\begin{align*}
0 &  =  \varphi([x, z])=[\varphi(x),
\varphi(z)]=[\varphi_{\mathcal{S},\mathcal{S}}(x)+\varphi_{\mathcal{S},\mathcal{J}}(x),
\varphi_{\mathcal{R},\mathcal{R}}(z)]\\
{} &= [\varphi_{\mathcal{S},\mathcal{J}}(x), \varphi_{\mathcal{R},\mathcal{R}}(z)]=[i, y]\neq 0.
\end{align*}
From this contradiction we obtain that   there is no any
automorphism of \(\mathcal{L}\) which maps \(\mathcal{S}\) onto
\(\mathcal{S}_1\). The proof is complete.
\end{proof}

Let \(\mathcal{G}=\mathcal{S}\dot{+}\mathcal{M}\) be a Levi decomposition of a Lie algebra \(\mathcal{G}\) satisfying the following conditions:
\[\mathcal{M}=\mathcal{N}+\left< p \right>, \quad \mathcal{N}=[\mathcal{M}, \mathcal{M}], \quad [\mathcal{S}, p]=\{0\}, \quad [\mathcal{S}, x]\neq 0, \ \text{for any} \ 0\neq x \in \mathcal{N}.\]

We set
\[\mathcal{I}=\mathcal{S} \quad \text{and} \quad\theta=\left. \id\right|_{\mathcal{S}} \colon \mathcal{S} \to \mathcal{I}.\]

On the space \(\mathcal{L}=\mathcal{S}\dot{+}\mathcal{M}\dot{+}\mathcal{I}\),
we define the multiplication table  in the following way:

products on \(\mathcal{S} \dot{+} \mathcal{M}\) remain unchanged (similar as in Lie algebra $\mathcal{G}$),
\[[\mathcal{L},\mathcal{I}]=[\mathcal{I}, \mathcal{N}]=0, \quad
[\theta(x), y]=\theta([x, y]) \ \text{for} \ x, y \in \mathcal{S}, \quad [i, p]=i, \ \text{for} \ i\in \mathcal{I}.\]

Straightforward verification of Leibniz identity shows that $\mathcal{L}$ is a Leibniz algebra.

Note that \(\mathcal{S}\) and \(\mathcal{I}\) are isomorphic
\(\mathcal{S}\)-modules via  the isomorphism \(\theta\).

\begin{prop} \label{exam02}
The above mentioned Leibniz algebra \(\mathcal{L}\)
admits two Levi subalgebras which are non conjugate by any automorphism of $\mathcal{L}$.
\end{prop}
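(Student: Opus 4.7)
The plan is to take $\mathcal{S}_\theta = \{x + \theta(x): x \in \mathcal{S}\}$ as the second Levi subalgebra---noting that $\theta = \left.\id\right|_{\mathcal{S}} \colon \mathcal{S} \to \mathcal{I}$ is an $\mathcal{S}$-module isomorphism, so $\mathcal{S}_\theta$ fits the template~\eqref{test}---and then to derive a contradiction from the assumption that some $\varphi \in \Aut(\mathcal{L})$ carries $\mathcal{S}$ onto $\mathcal{S}_\theta$. The guiding intuition is that $[\mathcal{S}, p] = 0$ while $[x + \theta(x), p] = \theta(x) \neq 0$; this asymmetry in how $p$ interacts with the two Levi subalgebras should obstruct any attempted conjugation.

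To implement this, I would first apply $\varphi$ to the identity $[\mathcal{S}, p] = 0$ to obtain $[\mathcal{S}_\theta, \varphi(p)] = 0$. By Proposition~\ref{propideals}, $\varphi(p) \in \varphi(\mathcal{R}) = \mathcal{R} = \mathcal{N} + \langle p\rangle + \mathcal{I}$, so I decompose
\[
\varphi(p) = n + \alpha p + j, \qquad n \in \mathcal{N},\ \alpha \in \mathbb{F},\ j \in \mathcal{I},
\]
and expand $[x + \theta(x), \varphi(p)]$ for an arbitrary $x \in \mathcal{S}$. The six resulting bilinear terms are handled by the rules $[\mathcal{L}, \mathcal{I}] = 0$, $[\mathcal{I}, \mathcal{N}] = 0$, $[\mathcal{S}, p] = 0$, and $[i, p] = i$, which kill all but two and leave
\[
[x + \theta(x), \varphi(p)] = [x, n] + \alpha \theta(x).
\]
Since $[x, n] \in \mathcal{N}$ and $\alpha\theta(x) \in \mathcal{I}$ lie in independent summands of $\mathcal{L} = \mathcal{S} \oplus \mathcal{N} \oplus \langle p\rangle \oplus \mathcal{I}$, setting this sum to zero splits into two independent equations. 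The relation $\alpha\theta(x) = 0$ for all $x$ forces $\alpha = 0$ (since $\theta \neq 0$), and $[x, n] = 0$ for all $x \in \mathcal{S}$ forces $n = 0$ by the standing hypothesis that $[\mathcal{S}, y] \neq 0$ for every nonzero $y \in \mathcal{N}$.

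This leaves $\varphi(p) = j \in \mathcal{I}$. But Proposition~\ref{propideals} also gives $\varphi(\mathcal{I}) = \mathcal{I}$, hence $\varphi^{-1}(\mathcal{I}) = \mathcal{I}$, which would force $p \in \mathcal{I}$, contradicting the direct-sum decomposition $\mathcal{L} = \mathcal{S}\dot{+}\mathcal{M}\dot{+}\mathcal{I}$ together with $p \in \mathcal{M}$. The main obstacle I anticipate is the careful bookkeeping in the expansion of $[x + \theta(x), \varphi(p)]$: the conclusion depends on each surviving piece landing in exactly the right summand of $\mathcal{L}$, so every one of the multiplication rules has to be applied in the correct spot. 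Once this accounting is done, the faithfulness of the $\mathcal{S}$-action on $\mathcal{N}$ and the preservation of $\mathcal{I}$ under automorphisms make the remainder essentially automatic.
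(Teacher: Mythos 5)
Your proposal is correct and follows essentially the same route as the paper: assume $\varphi(\mathcal{S})=\mathcal{S}_\theta$, apply $\varphi$ to the identity $[x,p]=0$, and use the rules $[\mathcal{S},p]=0$, $[i,p]=i$, $[\mathcal{L},\mathcal{I}]=[\mathcal{I},\mathcal{N}]=0$ together with the faithfulness of the $\mathcal{S}$-action on $\mathcal{N}$ to compare components and reach a contradiction. The only difference is bookkeeping: the paper first establishes $\varphi_{\mathcal{S},\mathcal{I}}\neq 0$ and a nonzero $p$-component of $\varphi(p)$ and contradicts the vanishing of the bracket directly, whereas you decompose $\varphi(p)=n+\alpha p+j$, deduce $\alpha=0$ and $n=0$, and conclude via the invariance $\varphi(\mathcal{I})=\mathcal{I}$ --- a slightly more self-contained variant of the same argument.
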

\begin{proof} Consider the Levi subalgebra $\mathcal{S}_\theta=\{x+\theta(x): x\in \mathcal{S}\}$.

Suppose that  there is $\varphi=\varphi_{\mathcal{S},\mathcal{S}}+\varphi_{\mathcal{S},\mathcal{R}}+\varphi_{\mathcal{R},\mathcal{R}}\in \Aut\mathcal{L}$ such that $\varphi(\mathcal{S})=\mathcal{S}_\theta$.

Similar as in the proof of Proposition~\ref{notconj} we obtain
\(\varphi_{\mathcal{S},\mathcal{R}}=\varphi_{\mathcal{S},\mathcal{I}}\neq 0\).

Taking into account \(\varphi(\mathcal{N})=\mathcal{N}\) we obtain that \(\varphi(p)=b+c\), where \(b=\alpha p\neq 0\) and \(c\in \mathcal{N}\).

For an element $0\neq x\in \mathcal{S}$ we have
\begin{align*}
0 &  =  \varphi(0)=\varphi([x, p])=[\varphi(x),
\varphi(p)]=[\varphi_{\mathcal{S},\mathcal{S}}(x),
\varphi_{\mathcal{R},\mathcal{R}}(p)]+[\varphi_{\mathcal{S},\mathcal{I}}(x),
\varphi_{\mathcal{R},\mathcal{R}}(p)]\\
{}&= [\varphi_{\mathcal{S},\mathcal{S}}(x), b+c]+[\varphi_{\mathcal{S},\mathcal{I}}(x), b+c]=[\varphi_{\mathcal{S},\mathcal{S}}(x), c]+[\varphi_{\mathcal{S},\mathcal{I}}(x), b]
=[\varphi_{\mathcal{S},\mathcal{S}}(x), c]+\alpha\varphi_{\mathcal{S},\mathcal{I}}(x).
\end{align*}
But \([\varphi_{\mathcal{S},\mathcal{S}}(x), c]+\alpha\varphi_{\mathcal{S},\mathcal{I}}(x)\neq 0\), because
 \([\varphi_{\mathcal{S},\mathcal{S}}(x), c]\in \mathcal{N}\) and \(0\neq \varphi_{\mathcal{S},\mathcal{I}}(x)\in \mathcal{I}\).

Thus, we get a contradiction with the supposition on the existence of an automorphism of \(\mathcal{L}\)  which maps \(\mathcal{S}\) onto \(\mathcal{S}_\theta\).
\end{proof}

\begin{exam}\label{E:exam}
Let us consider the following Leibniz algebra  $\mathcal{L}=\left<e_1, e_2, e_3, y_4,y_5, y_6, x_7, x_8, x_9 \right>$ with the
multiplication table:
\[\begin{array}{lll}
\, [e_1, e_2]=-[e_2, e_1]=2e_2, & [e_1, e_3]=-[e_3, e_1]=-2e_3, & [e_2, e_3]=-[e_3, e_2]=e_1, \\
\, [e_1, y_4]=-[y_4, e_1]=y_4,  & [e_1, y_5]=-[y_5, e_1]=-y_5, & \ \\
\, [e_2, y_5]=-[y_5, e_2]=y_4,  & [e_3, y_4]=-[y_4, e_3]=y_5, & \  \\
\, [y_4, y_6]=-[y_6, y_4]=y_4,  & [y_5, y_6]=-[y_6, y_5]=y_5.\\
 \end{array}\]
\[\begin{array}{lll}
\, [x_7, e_1]=-2x_7, & [x_7, e_3]=x_8, & [x_8, e_2]=2x_7, \\
\, [x_8, e_3]=-2x_9,  & [x_9, e_1]=2x_9, & \ \\
\, [x_9, e_2]=-x_8,  & [x_7, y_6]=x_7, & \  \\
\, [x_8, y_6]=x_8,  & [x_9, y_6]=x_9.\\
 \end{array}\]

Then \(\mathcal{L}=\mathcal{S}\dot{+}(\mathcal{M}\dot{+}
\mathcal{I})\), where \(\mathcal{S}=\left< e_1, e_2, e_3 \right> \cong \mathfrak{sl}_2\), $\mathcal{M}=\left< y_4,y_5, y_6 \right>$ and \(\mathcal{I}=\left< x_7, x_8, x_9\right>\).
Since \(\mathcal{N}=\left< x_7, x_8, x_9, y_4, y_5\right> \) and
\([\mathcal{S}, y_6]=\{0\}\), it follows that this Leibniz algebra has the above structure.
\end{exam}

\section{On conjugacy of Levi subalgebras of complex Leibniz algebras} \label{S:complex}

In this section we shall discuss on conjugacy of Levi subalgebras of Leibniz algebras over the field $\mathbb{C}$.

Below, we shall use the following auxiliary lemma.

\begin{lm}\label{degr}
Let \(b \in \mathcal{R}\) be an element such that \([\mathcal{S},
b]\subseteq \mathcal{I}\). Then \(\left. R_b^n\right|_{\mathcal{S}} \in
\Hom_\mathcal{S}(\mathcal{S},\mathcal{I})\) for all \(n\in
\mathbb{N}\).
\end{lm}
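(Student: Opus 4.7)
The plan is to prove both required properties of $R_b^n|_{\mathcal{S}}$ simultaneously by induction on $n$: namely, that $R_b^n$ sends $\mathcal{S}$ into $\mathcal{I}$, and that it intertwines the right $\mathcal{S}$-action on $\mathcal{S}$ and on $\mathcal{I}$, i.e.\ $R_b^n([x,y]) = [R_b^n(x),y]$ for all $x,y\in\mathcal{S}$.

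First I would handle the base case $n=1$. Containment in $\mathcal{I}$ is immediate from the hypothesis $[\mathcal{S},b]\subseteq \mathcal{I}$. For the module-homomorphism property, I would apply the Leibniz identity
\[
[x,[y,b]] = [[x,y],b] - [[x,b],y]
\]
and rearrange it as $R_b([x,y]) = [R_b(x),y] + [x,[y,b]]$. The key point is that $[y,b]\in\mathcal{I}$ by hypothesis, and the preliminary identity $[\mathcal{L},\mathcal{I}]=0$ recorded in Section~\ref{S:prel} forces $[x,[y,b]]=0$, yielding $R_b([x,y])=[R_b(x),y]$.

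For the inductive step, assuming the statement at level $n-1$, I would compute
\[
R_b^n([x,y]) = R_b\bigl([R_b^{n-1}(x),y]\bigr) = \bigl[[R_b^{n-1}(x),y],b\bigr].
\]
Applying the Leibniz identity once more to this last bracket and invoking again $[\mathcal{L},\mathcal{I}]=0$ to kill the term $[R_b^{n-1}(x),[y,b]]$ (now because $[y,b]\in\mathcal{I}$, regardless of where $R_b^{n-1}(x)$ lies), I arrive at $[[R_b^{n-1}(x),b],y] = [R_b^n(x),y]$. The containment $R_b^n(\mathcal{S})\subseteq \mathcal{I}$ follows from $R_b^{n-1}(\mathcal{S})\subseteq\mathcal{I}$ and the fact that $\mathcal{I}$ is a (two-sided) ideal.

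The proof is essentially a direct application of the Leibniz identity, with the one ingredient beyond pure bookkeeping being the collapse $[\mathcal{L},\mathcal{I}]=0$. I expect no real obstacle; the only subtlety worth flagging is being careful with the convention that $\mathcal{S}$-module morphisms $\mathcal{S}\to\mathcal{I}$ satisfy $\theta([x,y])=[\theta(x),y]$ (right-action intertwining), which is the convention fixed implicitly in the proof of Theorem~\ref{thmmain}, and the tacit restriction to $n\geq 1$ (for $n=0$ the map is $\id_{\mathcal{S}}$, which does not land in $\mathcal{I}$).
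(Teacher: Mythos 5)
Your proof is correct and follows essentially the same route as the paper: the base case via the Leibniz identity together with $[\mathcal{L},\mathcal{I}]=\{0\}$, then induction, reapplying the same identity to $[[R_b^{n-1}(x),y],b]$. You are merely a bit more explicit than the paper about the inductive step and about the containment $R_b^n(\mathcal{S})\subseteq\mathcal{I}$, which the paper leaves implicit.
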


\begin{proof} Since \([y,b]\in \mathcal{I}\) and $[\mathcal{L},\mathcal{I}]=\{0\}$, for any \(x, y\in \mathcal{S}\) we have
\[R_b([x, y])=[[x,y],b]=[[x,b],y]+[x, [y,b]]=[[x,b],y]=[R_b(x), y].\]

Now assume that \(R_b^{n-1}\) is an $\mathcal{S}$-module homomorphism, $n \geq 2$. Then
\[
R_b^{n}([x, y])  =
R_b(R_b^{n-1}([x,y]))=R_b([R_b^{n-1}(x),y])=[R_b^n(x), y].
\]
The proof is complete.
\end{proof}

Let $\mathcal{L}=\mathcal{S}\dot + \mathcal{R}$ be a Levi
decomposition of a  Leibniz algebra. Set
\[
\mathcal{E}=\left\{b\in \mathcal{N}: [\mathcal{S}, b]\subseteq
\mathcal{I}\right\}.
\]

\begin{prop} \label{IN} Let \(\mathcal{L}=\mathcal{S}\dot{+}
\mathcal{R}\) be a Levi decomposition of a complex Leibniz algebra
\(\mathcal{L}\) such that  \([\mathcal{S},
\mathcal{E}]=\mathcal{I}\).  Then \([\mathcal{I},
\mathcal{E}]=\{0\}\).
\end{prop}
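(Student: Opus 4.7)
The plan is to show that for every $b \in \mathcal{E}$ one has $R_b(\mathcal{I}) = \{0\}$, which is exactly the conclusion $[\mathcal{I},\mathcal{E}] = \{0\}$. The argument has three phases: identify $R_b|_{\mathcal{I}}$ as a nilpotent endomorphism of $\mathcal{I}$ as an $\mathcal{S}$-module; use Schur's lemma and the hypothesis $[\mathcal{S},\mathcal{E}] = \mathcal{I}$ to control the $\mathcal{S}$-module structure of $\mathcal{I}$; and exploit the cocycle arising from the Leibniz identity to force the vanishing.

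First I would check that $R_b|_{\mathcal{I}}$ is a well-defined $\mathcal{S}$-module endomorphism of $\mathcal{I}$ and is nilpotent. Stability of $\mathcal{I}$ under $R_b$ is immediate since $\mathcal{I}$ is an ideal. Writing the Leibniz identity as $[[i,s],b] = [i,[s,b]] + [[i,b],s]$ for $i \in \mathcal{I}$, $s \in \mathcal{S}$, and using $[s,b] \in \mathcal{I}$ (by definition of $\mathcal{E}$) together with $[\mathcal{L},\mathcal{I}] = \{0\}$ to kill the first summand, one gets $R_b R_s = R_s R_b$ on $\mathcal{I}$; hence $R_b|_{\mathcal{I}}$ commutes with the right $\mathcal{S}$-action and is $\mathcal{S}$-linear. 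Since $b \in \mathcal{N}$, an easy induction gives $R_b^{j}(\mathcal{L}) \subseteq \mathcal{N}^j$, so $R_b^k|_{\mathcal{L}} = 0$ for $k$ large. Then, invoking Weyl's complete reducibility, decompose $\mathcal{I} = \bigoplus_\gamma V_\gamma \otimes M_\gamma$ isotypically; by Lemma~\ref{degr} each $R_b|_{\mathcal{S}}$ lies in $\Hom_\mathcal{S}(\mathcal{S},\mathcal{I})$, and combining the hypothesis $[\mathcal{S},\mathcal{E}] = \mathcal{I}$ with Lemma~\ref{schur} forces every $V_\gamma$ to be isomorphic to a simple ideal of $\mathcal{S}$ and the vectors $v_b^\gamma \in M_\gamma$ corresponding to $R_b|_{\mathcal{S}}$ to span each $M_\gamma$ as $b$ varies in $\mathcal{E}$. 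By the first part, $R_b|_{\mathcal{I}}$ acts on $V_\gamma \otimes M_\gamma$ as $\id_{V_\gamma} \otimes A_b^\gamma$ for some nilpotent $A_b^\gamma \in \mathrm{End}(M_\gamma)$.

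The Leibniz-algebra commutator $R_b R_{b'} - R_{b'} R_b = -R_{[b,b']}$, restricted to $\mathcal{S}$ and transcribed through the Schur correspondence, yields the cocycle relation $A_b^\gamma v_{b'}^\gamma - A_{b'}^\gamma v_b^\gamma = v_{[b',b]}^\gamma$ in each $M_\gamma$. The main obstacle I anticipate is the final deduction that $A_b^\gamma = 0$: it must combine this cocycle, the spanning of $\{v_b^\gamma\}$, the individual nilpotency of each $A_b^\gamma$, and the nilpotency of $\mathcal{E} \subseteq \mathcal{N}$ (so that $b \mapsto A_b^\gamma$ is an anti-homomorphism onto a nilpotent subalgebra of $\mathrm{End}(M_\gamma)$). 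My intention is to apply Engel's theorem to simultaneously strictly upper-triangularize $\{A_b^\gamma\}_{b \in \mathcal{E}}$ along a flag $0 = M_\gamma^{(0)} \subset \cdots \subset M_\gamma^{(N)} = M_\gamma$, and then exploit the cocycle layer by layer: on the top quotient $M_\gamma / M_\gamma^{(N-1)}$ the $A_b^\gamma$ vanish, so the cocycle shows $v^\gamma$ vanishes there on $[\mathcal{E},\mathcal{E}]$; this restriction plus the spanning condition should constrain the next-to-top entries of each $A_b^\gamma$ and permit an inductive descent down the flag until the vanishing of every $A_b^\gamma$ is complete.
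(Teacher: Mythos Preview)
Your route diverges from the paper's and leaves a real gap at the end. The paper does not work with $R_b|_{\mathcal{I}}$ and multiplicity spaces at all; instead it reduces (via quotients by $\mathcal{J}_i=\bigoplus_{j\neq i}\mathcal{I}_j$) to the case in which $\mathcal{I}$ is an irreducible $\mathcal{S}$-module, and there the whole argument is a two--line application of Schur's lemma to the maps of Lemma~\ref{degr}: both $R_b|_{\mathcal{S}}$ and $R_b^2|_{\mathcal{S}}$ lie in the one--dimensional space $\Hom_{\mathcal{S}}(\mathcal{S},\mathcal{I})$, so $R_b^2|_{\mathcal{S}}=\lambda\,R_b|_{\mathcal{S}}$, and nilpotency of $R_b$ forces $\lambda=0$; since $R_b|_{\mathcal{S}}$ is onto $\mathcal{I}$ this gives $[\mathcal{I},b]=R_b\bigl(R_b(\mathcal{S})\bigr)=\{0\}$. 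No Engel flag, no cocycle, no descent.

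The step you leave open---the ``inductive descent down the flag''---cannot be completed from the ingredients you list. Take a two--dimensional multiplicity space $M_\gamma=\langle e_1,e_2\rangle$, an abelian $\mathcal{E}/\mathcal{I}$ generated by $b_1,b_2$, and set $A_{b_1}=\left(\begin{smallmatrix}0&1\\0&0\end{smallmatrix}\right)$, $A_{b_2}=0$, $v_{b_1}=e_2$, $v_{b_2}=e_1$. Then every hypothesis you isolate holds: each $A_b$ is nilpotent, $b\mapsto A_b$ is an (anti--)homomorphism into a nilpotent subalgebra, the $v_b$'s span $M_\gamma$, and the cocycle $A_b v_{b'}-A_{b'}v_b=v_{[b',b]}$ is satisfied (both sides vanish for every pair); yet $A_{b_1}\neq 0$. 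So ``Engel flag $+$ cocycle $+$ spanning'' alone do not force $A_b=0$; some further input is needed, and the paper's is precisely the reduction to simple $\mathcal{I}$ together with the Schur comparison of $R_b$ and $R_b^2$ restricted to $\mathcal{S}$. It is worth noting that the configuration above can actually be realised inside a Leibniz algebra with $\mathcal{I}\cong\mathcal{S}\oplus\mathcal{S}$ (put $[s,b_1]=(0,s)$, $[s,b_2]=(s,0)$, $[(s_1,s_2),b_1]=(s_2,0)$, $[(s_1,s_2),b_2]=0$, $[b_i,\mathcal{L}]=0$); in that example the $\mathcal{S}$-summands of $\mathcal{I}$ are \emph{not} two--sided ideals, so the paper's quotient step also deserves scrutiny---you should check carefully that the $\mathcal{J}_i$ are genuine ideals of $\mathcal{L}$ before passing to $\mathcal{L}/\mathcal{J}_i$.
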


\begin{proof}
Let us first consider  the case when \(\mathcal{S}\) and
\(\mathcal{I}\) are simple $\mathcal{S}$-modules.

Let us fix a non-zero element \(b\in \mathcal{E}\) such that
\([\mathcal{S}, b]\neq \{0\}\). Taking into account that
\(\mathcal{S}\) and \(\mathcal{I}\) are simple
$\mathcal{S}$-modules, we get  a non-zero module homomorphism
\(\left. R_b\right|_\mathcal{S}\). Applying Schur's lemma we
derive that the modules \(\mathcal{S}\) and  \(\mathcal{I}\) are
isomorphic  and \(\left. R_b\right|_\mathcal{S}\) is a unique up
constant isomorphism.

Let us take  non-trivial \(\tau\in \Hom_\mathcal{S}(\mathcal{S},
\mathcal{I})\). Then by Schur's lemma we have \(\tau=\alpha \left.
R_b\right|_\mathcal{S}, \alpha\in \mathbb{C}\). By
Lemma~\ref{degr}, \(\left. R_b^2 \right|_\mathcal{S}\) is also an
$\mathcal{S}$-module isomorphism and again applying Schur's lemma
we get \(\left. R_b^2 \right|_\mathcal{S}=\lambda \left.
R_b\right|_\mathcal{S}\) for some $\lambda\in \mathbb{C}$.

Since \(b\in \mathcal{N}\), it follows that \(R_b\) is nilpotent,
and therefore there exists $n\in \mathbb{N}$ such that
\(R_b^{n}=0\). Then \(0=\left. R_b^n
\right|_\mathcal{S}=\lambda^{n-1} \left. R_b\right|_\mathcal{S}\),
this implies \(\lambda=0\), because \(\left.
R_b\right|_\mathcal{S} \neq 0\). Thus \(R_b^2=0\). Taking an
arbitrary \(y\in \mathcal{I}\), we see that there exists an
element \(x\in \mathcal{S}\) such that \(R_b(x)=[x, b]=y\). We
have
\[
[y, b]=[[x, b],b]=R_b^2(x)=0.
\]
This means that \([\mathcal{I}, \mathcal{E}]=\{0\}\).

Now let us consider  the case when \(\mathcal{S}\) is semisimple and
\(\mathcal{I}\) is simple $\mathcal{S}$-module.

Let \(\mathcal{S}=\bigoplus\limits_{i=1}^n \mathcal{S}_i\) be  a
decomposition into  sum of  simple Lie ideals. Since
\(\mathcal{I}\) is irreducible, it follows that there exists an
index \(i\) such that \([\mathcal{S}_i,
\mathcal{E}]=\mathcal{I}\). By the above case we obtain that
\([\mathcal{I}, \mathcal{E}]=\{0\}\).

Finally let us consider  the general case.

Let \(\mathcal{I}=\bigoplus\limits_{j=1}^m \mathcal{I}_j\) be  a
decomposition into  sum of  irreducible
\(\mathcal{S}\)-modules. Set
\[
\mathcal{L}_i=\mathcal{S}\dot{+} \mathcal{R}_i,
\]
where \(\mathcal{R}_i=\mathcal{R}/\mathcal{J}_i\),
\(\mathcal{J}_i=\bigoplus\limits_{j\neq i}\mathcal{I}_j\) and
\(i=1, \dots,  m\). Then
\[
[\mathcal{S}, \mathcal{E}_i]=\mathcal{I}_i, \]
for all \(i=1, \dots,  m \), where
\(\mathcal{E}_i=\mathcal{E}/\mathcal{J}_i\). By the above case we
have
\[
[\mathcal{I}_i, \mathcal{E}_i]=\{0\},
\]
for \(i=1,\dots, m\). Thus
\[
[\mathcal{I}_i, \mathcal{E}]=[\mathcal{I}_i, \mathcal{E}_i+
\mathcal{J}_i]= [\mathcal{I}_i, \mathcal{E}_i]=\{0\}.
\]
Further
\[
\left[\mathcal{I},
\mathcal{E}\right]=\left[\bigoplus\limits_{j=1}^m \mathcal{I}_j,
\mathcal{E}\right]=\bigoplus\limits_{j=1}^m \left[\mathcal{I}_j,
\mathcal{E}\right]=\{0\}.
\]
The proof is complete.
\end{proof}

Now we present an example which satisfies the conditions of
Proposition~\ref{IN}.
\begin{exam}
Let us consider the Leibniz algebra \(L(2,0, 1)\) from
\cite{Camacho}:
\[\begin{array}{llll}
\, [e,h] \ =2e,  & [h,f] \ =2f, &[e,f] \ =h, \\
\, [h,e] \ =-2e, & [f,h] \ =-2f, & [f,e] \ =-h,\\
\, [x_1, e]\, =-2x_0,  & [x_2,e] \, =-2x_1, &[x_0,f] \, =x_1, \\
\, [x_1,f]\, =x_2, & [x_0,h] \, =2x_0, & [x_2, h] \, =-2x_2,\\
\, [e, y_1]\ =2x_0, & [f, y_1] \, = x_2, & [h, y_1] \, =2 x_1,\\
\, [y_1,y_2]=y_1, & [y_2,y_1]=-y_1,\\
\, [x_0, y_2]=x_0, & [x_1, y_2]= x_1, & [x_2, y_2]=x_2.\\
 \end{array}\]

In this example  \(\mathcal{L}=\mathcal{S}\dot{+} \mathcal{R}\),
where \(\mathcal{S}=\left<e, f, h\right>\),
\(\mathcal{R}=\left<x_0, x_1, x_2, y_1, y_2\right>\),
\(\mathcal{I}=\left<x_0, x_1, x_2\right>\) and \ \([\mathcal{S},
\mathcal{E}]=\mathcal{I}\), where
\(\mathcal{E}=\left< x_0, x_1, x_2, y_1\right>\).
\end{exam}

Let \(\mathcal{L}=\mathcal{S}\dot{+}\mathcal{R}\) be a Leibniz
algebra and
\begin{equation}\label{sandg}
\mathcal{S}=\mathcal{G}\oplus \mathcal{Q}
\qquad \text{and} \qquad \mathcal{I}=\mathcal{J}\oplus
\mathcal{K}
\end{equation}
be the decompositions from Section~\ref{S:conjugacy}, that is,
\[ \Hom_\mathcal{S}(\mathcal{S},
\mathcal{I})\equiv \Hom_\mathcal{S}(\mathcal{G},
\mathcal{J}) \quad  \text{and} \quad  \Hom_\mathcal{S}(\mathcal{S},
\mathcal{K})=\{0\}.\]

Let  \(\theta\) be an $\mathcal{S}$-module homomorphism such that
\(\theta(\mathcal{G})=\mathcal{J}\). Suppose that
\begin{equation*}
\mathcal{G}=\bigoplus\limits_{i=1}^n \mathcal{G}_i
\end{equation*}
is a decomposition into  sum of  simple Lie ideals. Set

\[\mathcal{J}_i=\theta(\mathcal{G}_i) \ \ \text{for all} \ i\in \{1,\dots,n\}.\]

Since \(\mathcal{G}_i\) is an ideal in \(\mathcal{G}\), it follows
that \(\mathcal{J}_i\) is a submodule of \(\mathcal{J}\).

Take \(z=\theta(x_i)=\theta(x_j)\in \mathcal{J}_i\cap
\mathcal{J}_j\), where \(i\neq j\). Let \(y_k\in \mathcal{G}_k\).

 If \(k\neq i\), then
\[
[z, y_k]=[\theta(x_i), y_k]=\theta([x_i, y_k])=0.
\]

If \(k\neq j\), then
\[
[z, y_k]=[\theta(x_j), y_k]=\theta([x_j, y_k])=0.
\]

Thus \(\left[\mathcal{J}_i\cap \mathcal{J}_j,
\mathcal{G}\right]=\{0\}\), and therefore the intersections
\(\mathcal{J}_i\cap\mathcal{J}_j\) are trivial submodules. Since
\(\theta\) maps \(\mathcal{G}\) onto \(\mathcal{J}\), it follows
that
\begin{equation*}
\mathcal{J}=\bigoplus\limits_{i=1}^n \mathcal{J}_i,
\end{equation*} and each submodule decomposes as
\begin{equation*}
\mathcal{J}_i=\bigoplus\limits_{j=1}^{n_i} \mathcal{J}_j^{(i)},
\end{equation*}
where  $\mathcal{J}_j^{(i)}$ is a simple \(\mathcal{G}_i\)-module.
Since \(\mathcal{J}_i=\theta(\mathcal{G}_i)\), it follows that
$\mathcal{G}_i\simeq \mathcal{J}_j^{(i)}$ for all \(j\in
\{1,\dots, n_i\}\) and \(i\in \{1, \dots,n\}\).

\begin{prop} \label{SRI} Let \(\mathcal{L}=\mathcal{S}\dot{+}
\mathcal{R}\) be a Levi decomposition of a complex Leibniz algebra
\(\mathcal{L}\) such that \([\mathcal{S},
\mathcal{E}]=\mathcal{J}\). Then any two Levi subalgebras of
\(\mathcal{L}\) are conjugate via an inner automorphism.
\end{prop}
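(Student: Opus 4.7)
The plan is to apply Corollary~\ref{cormain}, which reduces the claim to producing, for each $\tau\in\Hom_\mathcal{S}(\mathcal{S},\mathcal{I})$, an element $b\in\mathcal{N}$ with $\exp(R_b)|_\mathcal{S}=\id_\mathcal{S}+\tau$. I will seek $b$ inside $\mathcal{E}$: by Lemma~\ref{degr} the assignment $\Phi\colon\mathcal{E}\to\Hom_\mathcal{S}(\mathcal{S},\mathcal{I})$, $\Phi(b)=R_b|_\mathcal{S}$, is a well-defined linear map, and the whole argument rests on two ingredients about $\Phi$: its surjectivity, which realises an arbitrary $\tau$, and a vanishing statement that kills all higher-order terms in $\exp(R_b)$.

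To prove surjectivity, I would use the decompositions $\mathcal{G}=\bigoplus_i\mathcal{G}_i$ into simple Lie ideals and $\mathcal{J}=\bigoplus_{i,j}\mathcal{J}_j^{(i)}$ with $\mathcal{J}_j^{(i)}\cong\mathcal{G}_i$, obtained in the paragraph preceding the proposition. By Lemma~\ref{schur}, $\Hom_\mathcal{S}(\mathcal{S},\mathcal{I})\equiv\Hom_\mathcal{S}(\mathcal{G},\mathcal{J})$ decomposes as $\bigoplus_{i,j}\mathbb{C}\tau_j^{(i)}$, with $\tau_j^{(i)}\colon\mathcal{G}_i\to\mathcal{J}_j^{(i)}$ a fixed isomorphism. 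If the image $V=\Phi(\mathcal{E})$ missed the $(i,j)$-summand, then by Schur no element of $V$ could have any component landing in $\mathcal{J}_j^{(i)}$, so $V(\mathcal{S})=[\mathcal{S},\mathcal{E}]$ would fail to contain $\mathcal{J}_j^{(i)}$, contradicting $[\mathcal{S},\mathcal{E}]=\mathcal{J}$. Hence $V$ hits every one-dimensional summand and equals $\Hom_\mathcal{S}(\mathcal{S},\mathcal{I})$.

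The second ingredient is $[\mathcal{J},\mathcal{E}]=\{0\}$. For $b\in\mathcal{E}$ and $s\in\mathcal{S}$, $[s,b]\in\mathcal{I}$ together with $[\mathcal{L},\mathcal{I}]=\{0\}$ gives $R_{[s,b]}=0$, and the Leibniz identity $R_{[s,b]}=R_bR_s-R_sR_b$ shows that $R_b$ commutes with every $R_s$ on all of $\mathcal{L}$; so $R_b|_\mathcal{I}$ is an $\mathcal{S}$-module endomorphism, and $\Hom_\mathcal{S}(\mathcal{J},\mathcal{K})=\{0\}$ (since $\mathcal{K}$ shares no simple summand type with $\mathcal{G}$, hence none with $\mathcal{J}$) forces $R_b|_\mathcal{J}$ to land back in $\mathcal{J}$. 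To conclude $R_b|_\mathcal{J}=0$, I would mimic the final reduction in the proof of Proposition~\ref{IN}: decompose $\mathcal{J}$ into simple $\mathcal{S}$-submodules and, one summand at a time, pass to the quotient Leibniz algebra obtained by killing $\mathcal{K}$ together with all other summands of $\mathcal{J}$; in each such quotient the hypothesis descends to $[\mathcal{S},\mathcal{E}]=\mathcal{I}$, so Proposition~\ref{IN} delivers $[\mathcal{I},\mathcal{E}]=0$, and lifting this back and intersecting with the confinement $[\mathcal{J},\mathcal{E}]\subseteq\mathcal{J}$ forces $[\mathcal{J},\mathcal{E}]=\{0\}$.

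With both ingredients the argument closes at once: given $\tau$, surjectivity of $\Phi$ yields $b\in\mathcal{E}$ with $R_b|_\mathcal{S}=\tau$; since $R_b(\mathcal{S})\subseteq\mathcal{J}$ and $R_b|_\mathcal{J}=0$, all powers $R_b^k|_\mathcal{S}$ vanish for $k\ge 2$, so $\exp(R_b)|_\mathcal{S}=\id_\mathcal{S}+\tau$, and Corollary~\ref{cormain} concludes. The main obstacle I expect is the reduction step inside the proof of $[\mathcal{J},\mathcal{E}]=\{0\}$: the submodules by which one wishes to quotient need not split as genuine Leibniz ideals when $\mathcal{J}$ carries isotypic multiplicities, so verifying that the intended quotients are well defined and that the hypothesis of Proposition~\ref{IN} indeed holds in each of them is the technical heart of the proof.
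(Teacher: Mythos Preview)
Your overall strategy coincides with the paper's: reduce via Corollary~\ref{cormain}, decompose $\Hom_\mathcal{S}(\mathcal{S},\mathcal{I})$ into one-dimensional pieces via Lemma~\ref{schur}, realise each piece by some $R_b|_\mathcal{S}$ with $b\in\mathcal{E}$, and invoke Proposition~\ref{IN} to kill $R_b^2|_\mathcal{S}$. The paper does not package the middle step as surjectivity of your map $\Phi$; instead it chooses, for each $(i,j)$, an element $b_{i,j}$ with $0\neq[\mathcal{G}_i,b_{i,j}]\subseteq\mathcal{J}_j^{(i)}$, writes $\tau_{i,j}=\lambda_{i,j}R_{b_{i,j}}$ by Schur, and sets $b=\sum\lambda_{i,j}b_{i,j}$. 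It also cites Proposition~\ref{IN} directly for $R_{b_{i,j}}^2|_\mathcal{S}=0$ rather than isolating your intermediate lemma $[\mathcal{J},\mathcal{E}]=\{0\}$.

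Your surjectivity argument, however, has a genuine gap. What you actually establish is that for each $(i,j)$ the projection of $V=\Phi(\mathcal{E})$ onto $\mathbb{C}\tau_j^{(i)}$ is nonzero; but a linear subspace of $\bigoplus_{i,j}\mathbb{C}\tau_j^{(i)}$ that surjects onto every coordinate line need not be the whole space (the diagonal in $\mathbb{C}\oplus\mathbb{C}$ already shows this). So ``$V$ hits every one-dimensional summand'' does not give $V=\Hom_\mathcal{S}(\mathcal{S},\mathcal{I})$. Concretely, nothing you wrote rules out $\mathcal{E}=\mathcal{I}+\langle b\rangle$ with a single $b$ for which $R_b|_\mathcal{S}=\tau_1^{(1)}+\tau_1^{(2)}$, whence $\Phi(\mathcal{E})$ is one-dimensional while $\Hom_\mathcal{S}(\mathcal{S},\mathcal{I})$ is two-dimensional. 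To close the argument you need the sharper fact that for each $(i,j)$ some $b\in\mathcal{E}$ has $R_b|_\mathcal{S}$ lying \emph{inside} $\mathbb{C}\tau_j^{(i)}$ --- i.e.\ $[\mathcal{G}_i,b]\subseteq\mathcal{J}_j^{(i)}$ and $[\mathcal{G}_k,b]=0$ for $k\neq i$ --- which is precisely what the paper tacitly uses when it selects its $b_{i,j}$'s and sums them without cross-terms. That is where the real work sits; by contrast, the reduction you flag as the ``main obstacle'' for $[\mathcal{J},\mathcal{E}]=\{0\}$ is dispatched in the paper by a bare citation of Proposition~\ref{IN}.
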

\begin{proof}
Let \(\tau\in \Hom_\mathcal{S}(\mathcal{S}, \mathcal{J})\). By
Lemma~\ref{schur} we have  that
\[
\tau=\sum\limits_{i=1}^n\sum\limits_{j=1}^{n_i} \tau_{i j},
\]
where \(\tau_{i j} \colon \mathcal{G}_i\to\mathcal{J}_j^{(i)} \) are
module homomorphisms. For \(i, j\) take an element \(b_{i,j}\in
\mathcal{N}\) such that \(0\neq [\mathcal{G}_i, b_{i,j}]\subseteq
\mathcal{J}_j^{(i)}\). Then there exists \(\lambda_{i j}\in
\mathbb{C}\) such that \(\tau_{i,j}=\lambda_{i,j} R_{b_{i,j}}\).
By Proposition~\ref{IN} we get \( R^2_{b_{i,j}}\big|_\mathcal{S}=0\).

Set
\[
b=\sum\limits_{i=1}^n\sum\limits_{j=1}^{n_i} \lambda_{i,j}b_{i j},
\]
Then
\[
\left. \exp(R_b)\right|_{\mathcal{S}}=\prod\limits_{i=1}^n\prod\limits_{j=1}^{n_i}
\left. \exp(R_{\lambda_{i,j}b_{i
j}})\right|_{\mathcal{S}}=\left. \id\right|_\mathcal{S}+\sum\limits_{i=1}^n\sum\limits_{j=1}^{n_i}
\left. R_{\lambda_{i,j}b_{i j}}\right|_{\mathcal{S}}= \left. \id\right|_\mathcal{S}+\tau.
\]
Due to Corollary~\ref{cormain} the proof of the proposition is complete.
\end{proof}

We give the following criterion on conjugacy of Levi subalgebras.

\begin{thm}\label{T:equiv}
Let $\mathcal{L}$ be a complex Leibniz and let
$\mathcal{L}=\mathcal{S}\dot{+}\mathcal{R}$ be its Levi
decomposition. Then the following assertions are equivalent:
\begin{itemize}
\item[(i)] A Levi subalgebra $\mathcal{S}$ is unique up to conjugacy via an inner automorphism;
\item[(ii)] \([\mathcal{S}, \mathcal{E}]=\mathcal{J}\).
\end{itemize}
\end{thm}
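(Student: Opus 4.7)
The plan is to prove the two implications separately. The direction (ii) $\Rightarrow$ (i) is already Proposition~\ref{SRI}, so the substance lies in proving (i) $\Rightarrow$ (ii). The inclusion $[\mathcal{S}, \mathcal{E}] \subseteq \mathcal{J}$ holds without any assumption: for $b \in \mathcal{E}$, Lemma~\ref{degr} shows that $R_b|_\mathcal{S}$ is an $\mathcal{S}$-module homomorphism $\mathcal{S} \to \mathcal{I}$, and the defining property of the decomposition $\mathcal{I} = \mathcal{J} \oplus \mathcal{K}$ (namely $\Hom_\mathcal{S}(\mathcal{S}, \mathcal{I}) \equiv \Hom_\mathcal{S}(\mathcal{S}, \mathcal{J})$) forces its image to lie in $\mathcal{J}$. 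The heart of the theorem is the reverse inclusion $\mathcal{J} \subseteq [\mathcal{S}, \mathcal{E}]$ under hypothesis (i).

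To establish this, I would appeal to the fine decomposition of $\mathcal{J}$ recorded just before Proposition~\ref{SRI}, namely $\mathcal{J} = \bigoplus_i \bigoplus_j \mathcal{J}_j^{(i)}$, with each $\mathcal{J}_j^{(i)}$ a simple $\mathcal{S}$-submodule isomorphic to the simple ideal $\mathcal{G}_i$ of $\mathcal{G}$. For each pair $(i,j)$ I would fix the $\mathcal{S}$-module isomorphism $\tau_{ij} \colon \mathcal{G}_i \to \mathcal{J}_j^{(i)}$ coming from $\theta$, extend it by zero on the remaining summands of $\mathcal{S}$, and invoke Corollary~\ref{cormain} under hypothesis (i) to produce $b_{ij} \in \mathcal{N}$ with $\exp(R_{b_{ij}})|_\mathcal{S} = \id_\mathcal{S} + \tau_{ij}$. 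Once I know $b_{ij} \in \mathcal{E}$, Lemma~\ref{degr} combined with $[\mathcal{L}, \mathcal{I}] = 0$ collapses the exponential series to $\exp(R_{b_{ij}})|_\mathcal{S} = \id_\mathcal{S} + R_{b_{ij}}|_\mathcal{S}$, so $\tau_{ij} = R_{b_{ij}}|_\mathcal{S}$ and $\mathcal{J}_j^{(i)} = \tau_{ij}(\mathcal{G}_i) = [\mathcal{G}_i, b_{ij}] \subseteq [\mathcal{S}, \mathcal{E}]$. Summing over all $(i, j)$ yields $\mathcal{J} \subseteq [\mathcal{S}, \mathcal{E}]$.

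The crucial step, and where I expect the main obstacle to lie, is the claim that the element $b_{ij}$ produced above actually satisfies $[\mathcal{S}, b_{ij}] \subseteq \mathcal{I}$. I would pass to the Lie-algebra quotient $\widetilde{\mathcal{L}} = \mathcal{L}/\mathcal{I}$, where $\exp(R_{b_{ij}})$ descends to $\exp(R_{\widetilde{b_{ij}}})$; since $\tau_{ij}$ takes values in $\mathcal{I}$, this descended automorphism fixes $\mathcal{S}$ pointwise in $\widetilde{\mathcal{L}}$. The remaining ingredient is the elementary but essential nilpotency fact: if $N$ is a nilpotent operator and $\exp(N)(x) = x$, then choosing the largest $n \geq 1$ with $N^n(x) \neq 0$ and applying $N^{n-1}$ to the identity $\sum_{k \geq 1} N^k(x)/k! = 0$ forces $N^n(x) = 0$, a contradiction, so $N(x) = 0$. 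Applying this to $N = R_{\widetilde{b_{ij}}}$ on $\widetilde{\mathcal{L}}$ and to each $x \in \mathcal{S}$ yields $[x, b_{ij}] \in \mathcal{I}$, i.e., $b_{ij} \in \mathcal{E}$, closing the argument.
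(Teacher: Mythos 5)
Your overall architecture is sound and close to the paper's: (ii)$\Rightarrow$(i) is Proposition~\ref{SRI}; the inclusion $[\mathcal{S},\mathcal{E}]\subseteq\mathcal{J}$ via Lemma~\ref{degr} and $\Hom_\mathcal{S}(\mathcal{S},\mathcal{K})=\{0\}$ is fine; and your quotient-plus-nilpotency argument showing that the element $b_{ij}$ furnished by Corollary~\ref{cormain} satisfies $[\mathcal{S},b_{ij}]\subseteq\mathcal{I}$, i.e.\ $b_{ij}\in\mathcal{E}$, is correct (and makes explicit a point the paper leaves implicit). But there is a genuine gap at the step you yourself flag as crucial, namely the claim that $b_{ij}\in\mathcal{E}$ together with $[\mathcal{L},\mathcal{I}]=0$ ``collapses'' the exponential to $\exp(R_{b_{ij}})|_\mathcal{S}=\id_\mathcal{S}+R_{b_{ij}}|_\mathcal{S}$. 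For $x\in\mathcal{S}$ one has $R_{b_{ij}}^2(x)=[[x,b_{ij}],b_{ij}]$, and although $[x,b_{ij}]\in\mathcal{I}$, this bracket is of the form $[\mathcal{I},\mathcal{N}]$, with the ideal element on the \emph{left}; the identity $[\mathcal{L},\mathcal{I}]=0$ only says that left multiplications annihilate $\mathcal{I}$ and gives no control over $[\mathcal{I},b_{ij}]$. Indeed, $[\mathcal{I},\mathcal{E}]=\{0\}$ is precisely the nontrivial content of Proposition~\ref{IN}, proved there under the extra hypothesis $[\mathcal{S},\mathcal{E}]=\mathcal{I}$, so it cannot be taken for granted here. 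Lemma~\ref{degr} only tells you that each power $R_{b_{ij}}^k|_\mathcal{S}$ is a module homomorphism into $\mathcal{I}$, not that it vanishes for $k\ge 2$; without that, you only get $\tau_{ij}(x)-R_{b_{ij}}(x)\in[\mathcal{I},\mathcal{E}]$, which does not yield $\mathcal{J}_j^{(i)}\subseteq[\mathcal{S},\mathcal{E}]$.

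The paper closes exactly this step by the Schur-lemma/nilpotency argument of Proposition~\ref{IN}: since $\mathcal{G}_i$ is simple and the powers $R_{b}^k|_{\mathcal{G}_i}$ are $\mathcal{S}$-module homomorphisms into modules isomorphic to $\mathcal{G}_i$ (Lemma~\ref{degr} and Lemma~\ref{schur}), the nilpotency of $R_b$ (as $b\in\mathcal{N}$) forces $R_{b}^2|_{\mathcal{G}}=0$, whence $\tau_j=R_{b_j}|_{\mathcal{G}}$ and $[\mathcal{G},b_j]=\mathcal{J}_j$; this is also why the field $\mathbb{C}$ enters. Structurally, the paper first treats $\mathcal{G}$ simple and then reduces the semisimple case by passing to the quotients $\mathcal{L}_i=\mathcal{S}\dot{+}\mathcal{R}/\mathcal{I}_i$, using that quotients of inner automorphisms are inner, whereas you work directly with the fine decomposition $\mathcal{J}=\bigoplus_{i,j}\mathcal{J}_j^{(i)}$ and extend each $\tau_{ij}$ by zero; that organizational difference is fine and arguably cleaner, but your proof only becomes complete once you replace the erroneous ``collapse'' justification by the Schur/nilpotency argument giving $R_{b_{ij}}^2|_\mathcal{S}=0$.
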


\begin{proof}
(ii) $\Rightarrow$ (i) follows from Proposition~\ref{SRI}.

Let us prove  (i) $\Rightarrow$ (ii). Let
\(\mathcal{S}=\mathcal{G}\oplus \mathcal{Q}\) be a decomposition
of the form \eqref{sandg}.

Let us first consider the case \(\mathcal{G}\) is  a simple Lie
algebra. Let \(\mathcal{J}=\bigoplus\limits_{i=1}^n
\mathcal{J}_i\) be a decomposition into simple modules and
\(\mathcal{G}\cong \mathcal{J}_i\) for all \(i=1, \dots, n\).
Take an arbitrary \(\tau_j \colon \mathcal{G} \rightarrow
\mathcal{J}_j\) module isomorphism. Since any two subalgebras of
\(\mathcal{L}\) are conjugate, there exists an element \(b_j\in
\mathcal{N}\) such that
\[
\left. \exp(R_{b_j})\right|_{\mathcal{G}}=\left. \id \right|_{\mathcal{G}}+\tau_j.
\]
Similarly as in the proof of Proposition~\ref{IN}, we obtain that
\(R_{b_j}^2 \big|_\mathcal{G}=0\). Thus
\(\left.  \tau_j=R_{b_j}\right|_\mathcal{G}\), and therefore \([\mathcal{G},
b_j]=\mathcal{J}_j\). Thus
\[
\mathcal{J}\supseteq [\mathcal{G}, \mathcal{E}]\supseteq
[\mathcal{G}, b_1]+\dots +[\mathcal{G}, b_n]=\mathcal{J}.
\]
Hence \([\mathcal{G}, \mathcal{E}]=\mathcal{J}\).

Now let us consider the general case.

Let \(\mathcal{G}\) be a semisimple Lie algebra and let
\(\mathcal{J}=\bigoplus\limits_{j=1}^m \mathcal{J}_j\) be a
decomposition into sum of  irreducible
\(\mathcal{G}\)-modules. Set
\[
\mathcal{L}_i=\mathcal{S}\dot{+} \mathcal{R}_i,
\]
where \(\mathcal{R}_i=\mathcal{R}/\mathcal{I}_i\),
\(\mathcal{I}_i=\bigoplus\limits_{j\neq i}\mathcal{J}_j\) and
\(i=1, \dots,  m\). Then
\begin{equation}\label{quo}
\Hom_\mathcal{S}(\mathcal{S},
\mathcal{J}_i)\equiv \Hom_\mathcal{S}(\mathcal{G}_i,
\mathcal{J}_i).
\end{equation}
Since a Levi subalgebra $\mathcal{S}$ of \(\mathcal{L}\) is unique
up to conjugacy via an inner automorphism and the quotient of inner
automorphism is also inner, it follows that $\mathcal{S}$ is
unique up to conjugacy via an inner automorphism as Levi
subalgebra of \(\mathcal{L}_i\), for all \(i=1, \dots, m\). Taking
into account \eqref{quo} and   \(\mathcal{G}_i\) is a simple
algebra, by the above case we obtain that
\[
[\mathcal{G}, \mathcal{E}_i]=\mathcal{J}_i,
\]
for all \(i=1, \dots,  m \), where
\(\mathcal{E}_i=\mathcal{E}/\mathcal{J}_i\). Thus
\[
[\mathcal{S}_i, \mathcal{E}]=[\mathcal{S}_i, \mathcal{E}_i+
\mathcal{I}_i]= \mathcal{J}_i.
\]
Further
\[
\left[\mathcal{S},
\mathcal{E}\right]=\left[\bigoplus\limits_{j=1}^m \mathcal{S}_j,
\mathcal{E}\right]=\bigoplus\limits_{j=1}^m \left[\mathcal{S}_j,
\mathcal{E}\right]=\bigoplus\limits_{j=1}^m
\mathcal{J}_j=\mathcal{J}.
\]
The proof is complete.
\end{proof}

\begin{rem}
Note that in Proposition~\ref{exam02} and Example~\ref{E:exam}, we have $\mathcal{E}=0$. Therefore,
$0=[\mathcal{S},\mathcal{E}]\neq \mathcal{J}$.
\end{rem}

\begin{cor} Let \(\mathcal{L}=\mathcal{S}\dot{+}
\mathcal{R}\) be a Levi decomposition of a complex Leibniz algebra
\(\mathcal{L}\) such that \([\mathcal{S},
\mathcal{N}]=\mathcal{J}\). Then any two Levi subalgebras of
\(\mathcal{L}\) are conjugate via an inner automorphism.
\end{cor}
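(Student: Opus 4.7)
The plan is to reduce this corollary to the equivalence established in Theorem~\ref{T:equiv}, which says that uniqueness of the Levi subalgebra up to conjugacy by an inner automorphism is equivalent to $[\mathcal{S}, \mathcal{E}] = \mathcal{J}$, where $\mathcal{E} = \{b \in \mathcal{N} : [\mathcal{S}, b] \subseteq \mathcal{I}\}$. Thus the entire task is to show that the hypothesis $[\mathcal{S}, \mathcal{N}] = \mathcal{J}$ forces $[\mathcal{S}, \mathcal{E}] = \mathcal{J}$.

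First I would observe that by construction $\mathcal{J} \subseteq \mathcal{I}$. So the assumption $[\mathcal{S}, \mathcal{N}] = \mathcal{J}$ yields $[\mathcal{S}, \mathcal{N}] \subseteq \mathcal{I}$. This means every $b \in \mathcal{N}$ satisfies $[\mathcal{S}, b] \subseteq [\mathcal{S}, \mathcal{N}] \subseteq \mathcal{I}$, which is precisely the defining condition for membership in $\mathcal{E}$. Hence $\mathcal{N} \subseteq \mathcal{E}$, and since the reverse inclusion $\mathcal{E} \subseteq \mathcal{N}$ holds by definition, we conclude $\mathcal{E} = \mathcal{N}$.

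Once this identification is in hand, the hypothesis rewrites as $[\mathcal{S}, \mathcal{E}] = [\mathcal{S}, \mathcal{N}] = \mathcal{J}$, which is exactly condition (ii) in Theorem~\ref{T:equiv}. Invoking that theorem then delivers condition (i): the Levi subalgebra $\mathcal{S}$ is unique up to conjugacy by an inner automorphism, so any two Levi subalgebras of $\mathcal{L}$ are conjugate via an inner automorphism.

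There is essentially no obstacle here; the only subtlety is the containment $\mathcal{J} \subseteq \mathcal{I}$, which is immediate from the definition of $\mathcal{J}$ given just before Proposition~\ref{conauto}. The corollary therefore amounts to unpacking a definition and quoting the main theorem, and the proof in final form should be a few lines.
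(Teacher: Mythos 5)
Your proof is correct and is essentially the paper's intended argument: the corollary is stated without proof precisely because $[\mathcal{S},\mathcal{N}]=\mathcal{J}\subseteq\mathcal{I}$ forces $\mathcal{E}=\mathcal{N}$, so condition (ii) of Theorem~\ref{T:equiv} (equivalently, Proposition~\ref{SRI}) applies immediately. Nothing is missing.
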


Recall that \(\mathcal{J}\) is the maximal submodule of
\(\mathcal{I}\) such that \(\Hom_\mathcal{S}(\mathcal{S},
\mathcal{I})\equiv \Hom_\mathcal{S}(\mathcal{S},
\mathcal{J})\).

In the following graph we describe the considered cases.

\begin{tikzpicture}
\node at (7,6) {$\mathcal{J}$};
\node at (9,5) {$\mathcal{J} \neq \{0\}$};
\node at (6.3,2.5) [fill=red!10,text width=2.4cm]{$[\mathcal{S}, \mathcal{R}]\neq \{0\}$, $[\mathcal{S}, \mathcal{E}] \neq \mathcal{J}$};
\node at (3,2.5) [fill=red!10,text width=1.9cm] {$[\mathcal{S}, \mathcal{E}] = \mathcal{J}$};
\node at (1,5) {$\mathcal{J}= \{0\} $};
\node at (10,2.5) [fill=red!10,text width=2.4cm] {$[\mathcal{S}, \mathcal{R}] = \{0\}$, $[\mathcal{J}, \mathcal{R}] \neq \{0\} $};
\node at (13.5,2.5) [fill=red!10,text width=2.4cm]{$[\mathcal{J}, \mathcal{R}]=\{0\}  $};
\node at (6.25,0) [fill=blue!10,text width=1.7cm]{\hspace*{0.5cm} not \\ conjugate};;
\node at (3.2,0) [fill=blue!10,text width=1.7cm]{conjugate by inner automorphisms};
\node at (3.75,1.75) {$\mathbb{F}=\mathbb{C}$};
\node at (1,0) [fill=blue!10,text width=1.7cm]{conjugate by inner automorphisms};
\node at (10,0) [fill=blue!10,text width=1.7cm]{\hspace*{0.5cm} not \\ conjugate};
\node at (13.5,0) [fill=blue!10,text width=2.3cm]{conjugate by non-inner automorphisms};
\node at (6.3,-1.5) [text width=2.2cm]{Prop.~\ref{exam02} \\ Exam.~\ref{E:exam}};
\node at (3.2,-1.5) {Thm.~\ref{T:equiv}};
\node at (1,-1.5) {Prop.~\ref{homzero}};
\node at (10,-1.5) {Prop.~\ref{notconj}};
\node at (13.5,-1.5) {Prop.~\ref{conauto}};
\draw [-stealth] (6.75,5.75)--(1.8,5);
\draw [-stealth] (7.25,5.75)--(8.5,5.2);
\draw [-stealth] (9,4.7)--(10.2,3.15);
\draw [-stealth] (8.7,4.75)--(6.7,3.15);
\draw[-stealth] (9.35,4.7)--(13.8,2.95);
\draw[-stealth] (8.15,4.75)--(3.35,2.95);
\draw [-stealth] (6.25,1.9)--(6.25,1);
\draw [-stealth] (10,1.9)--(10,1);
\draw [-stealth] (13.25,2)--(13.25,1.1);
\draw [stealth-stealth] (3,2.1)--(3,1.1);
\draw [-stealth] (1,4.5)--(1,1.1);
\end{tikzpicture}

\section*{Acknowledgements}
This work was supported by Agencia Estatal de Investigaci\'on (Spain), grant MTM2016-79661-P (European FEDER support included, UE), and by Ministry of Education and Science of the Republic of Kazakhstan,
grant No. 0828/GF4.


\begin{thebibliography}{99}

%

\bibitem{Ayup} Ayupov Sh.A., Omirov B.A. On Leibniz algebras Algebra and operator theory (Tashkent, 1997), 1--12, Kluwer Acad. Publ., Dordrecht, 1998.

\bibitem{Barnes} Barnes D.W. On Levi's theorem for Leibniz algebras, \emph{Bull. Aust. Math. Soc.} \textbf{86} (2) (2012), 184--185.

\bibitem{Jac} Jacobson N. Lie algebras.  Interscience Publishers, New York-London, 1962.



\bibitem{Camacho} Camacho L.M., G\'{o}mez-Vidal S., Omirov B.A.
Leibniz algebras associated to extensions of $sl_2$. \emph{Comm. Algebra} \textbf{43} (10) (2015), 4403--4414.


\bibitem{Levi} Levi E.E. \emph{Atti. Accad. Sci. Torino Cl. Sci. Fis. Mat. Natur.} \textbf{40} (1906),  3--17.


\bibitem{Lod} Loday J.L. Une version non commutative des alg\`ebres de Lie: les
  alg\`ebres de Leibniz, \textit{Enseign. Math.  (2)}  \textbf{39} (3-4) (1993),  269--293.


\bibitem{Malcev} Malcev A.I. On the representation of an algebra as a direct sum of the radical and a semi-simple subalgebra, \emph{Dokl. Akad. Nauk SSSR} \textbf{36} (2) (1942), 42--45 (in Russian).

\bibitem{Onishchik} Onishchik A.L., Vinberg E.B. Lie groups and algebraic groups. Springer Series in Soviet Mathematics, Springer-Verlag, Berlin, 1990.

\bibitem{Zhel}  Zhelobenko D.P. Compact Lie groups and their representations. AMS, Providence, R.I., 1973.
\vskip 1cm

\end{thebibliography}
\end{document}